\documentclass[a4paper,11pt]{article}

\usepackage{amssymb}
\usepackage{amsfonts,dsfont}
\usepackage{amsmath}
\usepackage{amsthm}
\usepackage{cite}
 \usepackage{enumitem}
\numberwithin{equation}{section}

\usepackage{xcolor}

\newcommand{\R}{\mathbb{R}}
\newcommand{\Z}{\mathbb{Z}}

\newcommand{\cuad}{{\sqcap\kern-.68em\sqcup}}

\newcommand{\dd}{\,d}

\newtheorem{definition}{Definition}[section]
\newtheorem{theorem}[definition]{Theorem}
\newtheorem{proposition}[definition]{Proposition}

\newtheorem{lemma}[definition]{Lemma}

\newtheorem{corollary}{Corollary}[section]

\renewcommand{\phi}{\varphi}

\newcommand{\cQ}{{\mathcal Q}}

\newcommand{\cT}{{\mathcal T}}

\newcommand{\bX}{{\mathbb X}}

 \allowdisplaybreaks

\begin{document}

\begin{center}
{\bf \large  Existence of positive Solutions to    H\'enon equation
\\[2mm]
on  integer Lattice graph and   Half-Space
}

\bigskip\bigskip

  {\small  Huyuan Chen\footnote{chenhuyuan@yeah.net, chenhuyuan@simis.cn} \qquad   Lixun Jia\footnote{lxjia26@m.fudan.edu.cn}
 \bigskip

  $ ^{1}$   Center for Mathematics and Interdisciplinary Sciences, Fudan University, \\
Shanghai 200433, China\\[8pt]

   $ ^{1,2}$ Shanghai Institute for Mathematics and Interdisciplinary Sciences,\\
 Shanghai 200433, China\\[12pt]

}
  \medskip

\begin{abstract}
 We study positive solutions of a discrete H\'enon equation on the integer lattice graph and on the half-space, with zero Dirichlet boundary conditions in the half-space case. The nonlinearity is governed by a H\'non-type potential with positive exponent. For sufficiently large nonlinearity exponents, we prove the existence of positive solutions both on the whole lattice in dimensions at least three and on the half-space in dimensions at least two.

 Thanks to the H\'enon potential, the standard variational compactness is unavailable to obtain the positive solution. Our  proofs combine weighted Banach spaces, two-sided Green kernel estimates and the Schauder fixed point theorem.
 \end{abstract}
 \end{center}
 {\small
 \noindent {\small {\bf Keywords}:  H\'enon equation;  integer lattice graph; Existence of positive solutions.  }
   \smallskip

   \noindent {\small {\bf AMS Subject Classifications}:     35R02;  35A01; 35J62 .
  \smallskip

}
}
\bigskip

\setcounter{equation}{0}
\section{Introduction}
Let the integer lattice graph $\Z^d$, $d\geq 2$, be equipped with the counting measure and unit edge weight.    Every vertex has degree $2d$ and $x\sim y$ if and only if $\displaystyle \sum^d_{k=1}|x_k-y_k|=1$,  and
denote by $\Delta$  the Laplacian operator
$$\Delta u(x)=\sum_{y\sim x }\big(u(y)-u(x)\big),\quad\  \forall x\in\mathbb{Z}^d.$$
 Our aim is to consider the positive solution of the H\'enon equation
 \begin{equation}\label{eq 1.0}
 -\Delta u=Q_\beta u^{q}\quad {\rm in}\ \,\Omega,
 \end{equation}
 subject to the zero Dirichlet boundary condition $u=0$ on $\delta \Omega:=\{y\in\Z^d\setminus\Omega: \exists x\in\Omega, x\sim y\}$ if $\Omega\not=\Z^d$,
 where $q>1$, $\Omega$ is a unbounded domain of $\Z^d$, $Q_\beta\in C(\Omega)$ is a   H\'enon type potential satisfying
\begin{itemize}
			\item[ $(\cQ_0)$]  there exist $\beta>0$, $x_0\in\Omega$ and   $C>0$ such that
			\begin{equation}\label{con H-0}
		Q_\beta(x_0)=1\quad {\rm and} \quad 		0\leq  Q_\beta(x)\leq C(1+|x-x_0|)^{\beta}\ \ {\rm in}\ \, \Omega.
			\end{equation}
\end{itemize}

The H\'enon equation originates from H\'enon's investigation of the stability of spherically symmetric stellar systems~\cite{Henon}. To analyze the mass distribution and its evolution, he employed a concentric spherical shell model, leading to a nonlinear elliptic equation featuring a spatial weight function. In modern literature, the boundary-value problem
\[
\begin{cases}
-\Delta u=|x|^\alpha u^{p-1} & \text{in }\Omega,\\
u=0 & \text{on }\partial\Omega
\end{cases}
\]
is commonly referred to as the H\'enon equation. In 1982, Ni \cite{Ni} the H\'enon problem
\[
\begin{cases}
-\Delta u=|x|^\alpha u^{p-1} & \text{in } B_1(0),\\
u>0 & \text{in } B_1(0),\\
u=0 & \text{on }\partial B_1(0)
\end{cases}
\]
admits a radially symmetric solution for \(2<p<2_\alpha^*\) where $2_\alpha^*=\frac{2(N+\alpha)}{N-2}$,
  $N\geq3$ and $\alpha>0$. The crucial tool is the compact embedding
\[
H^1_{0,\mathrm{rad}}(B_1(0))
\hookrightarrow L^p(B_1(0),|x|^\alpha\,dx)
\quad\text{for }1\leq p<2_\alpha^*.
\]
 Later on, in the whole Euclidean space,  semilinear elliptic equations with H\'enon type potential
\[
-\Delta u=Q(x)u^{q} \qquad \text{in }\mathbb{R}^{d},
\]
have been studied  extensively.  Ni \cite{NiGeometry} investigated the
critical problem  motivated in part
by the prescribed scalar curvature problem; Gui
\cite{Gui1996} studied the existence and asymptotic behavior of
positive entire solutions.  Also see the references \cite{BaeChangPahk2002,GigaNgo2022} for the H\'enon type equations. \smallskip

In the discrete graph,   the discreteness leads to the failure of the dilation invariance, of admissibility of local singularity   and of applying classical ODE-based techniques in dealing with semilinear equations, such as the shooting method or phase-plane analysis.
 Moreover,  many different phenomenon of solutions structure appears for  semilinear equations in discrete graph,   see \cite{HLW,HLY21,LY20,LY22,G20,G18} and the references therein.  Particularly,  Grigor'yan-Lin-Yang \cite{GLY,GLY16}  considered Yamabe-type equations on graphs.    Hua and Xu obtained ground-state solutions for nonlinear Schr\"odinger equations
on lattice graphs \cite{HuaXu}. Hua-Li \cite{HL} proved the existence of externals for discrete Sobolev and
Hardy--Littlewood--Sobolev inequalities.  On  Liouville type theorem for the semilnear elliptic equations
\[
-\Delta_G u \geq Q(x) u^{q}\quad {\rm in}\ \, G
\]
has been studies \cite{GuHuangSun2023,MonticelliPunzoSomaglia2025,JleliSamet2026,NiuLu2025,HuaLinZhang2026,ChenHuaXu2026} and the references therein.

Recently, \cite{GHHS} established the nonexistence of positive solutions to the inequality
\[
-\Delta u \geq (1+|x|)^{\beta} u^q \quad \text{in } \Omega,
\]
for $\beta > -2$ and for exponents $q$ lying in the Serrin-subcritical and Serrin-critical regimes, where $\Omega$ denotes either $\mathbb{Z}^N$ or certain classes of unbounded domains. Taking the model $Q_\beta(x)=(1+|x|)^\beta$,   \cite{CHZ} investigated equation (\ref{eq 1.0}) and proved existence of positive solutions in the Sobolev-supercritical regime when $\beta \leq 0$ by the variational method, while establishing nonexistence in the Serrin-subcritical and Serrin-critical regimes for   $\beta\in\R$.
Precisely,  for $d\geq3$,
\begin{itemize}
			\item[ $(a)$] when $\Omega=\Z^d$ ,
(\ref{eq 1.0})  admits  a positive solution if $q>\frac{2(d+\beta)}{d-2}-1$ and $\beta\in(-2,0]$,  and no nonnegative nonzero solution if $q\in(0,\frac{d+\beta}{d-2}]$ and $\beta\in(-2,+\infty)$;
\item[ $(b)$]
when $\Omega=\Z^d_+$,
(\ref{eq 1.0})  admits  a positive solution if $q>\frac{2(d+\beta)}{d-2}-1$ and $\beta\in(-2,0]$,  and no nonnegative nonzero solution if $q\in(0,\frac{d+1+\beta}{d-1})$ and $\beta\in(-2,+\infty)$.
\end{itemize}
{\it There remain several open cases: $(i)$ the existence of positive solutions for $q$ lying strictly between the Serrin exponent and the Sobolev exponent when $\beta \in (-2, 0]$, and $(ii)$ the existence of positive solutions for $q$ exceeding the Serrin exponent when $\beta > 0$. }  Inspired by the second open gap, our purpose in this article is to consider positive solutions  to  equation (\ref{eq 1.0}) when $\beta>0$.

Firstly, we consider the H\'enon equation  in the whole $\mathbb{Z}^d$, i.e.
\begin{equation}\label{eq: 1.1}
 \begin{cases}
 -\Delta u(x)=Q_\beta(x)u(x)^q&\text{in } \mathbb Z^d,\\[1mm]
 \qquad u\geq 0&\text{in }\ \Z^d
 \end{cases}
\end{equation}
and our first results   state as follows.
\begin{theorem}\label{thm:intro-whole}
Let $d\geq3$ and $(\cQ_0)$ hold with $\beta>0$ and $x_0=0$. There exists
\[
 q^*=q^*(d,\beta)>\frac{d+\beta}{d-2}
\]
such that for every $q\geq q^*$, problem  (\ref{eq: 1.1})
has a positive solution satisfying
\[
 u(x)\asymp(1+|x|)^{2-d} \quad{\rm for}\ \,  x\in\Z^d.
\]
\end{theorem}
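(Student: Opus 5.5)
We plan to construct the solution as a fixed point of $u=\lambda G*\big(Q_\beta u^q\big)+\text{(small positive term)}$, or more cleanly $u=\Gamma+\mathcal{T}u$, where $\Gamma(x)=G(x,0)$ is the Green function of $-\Delta$ on $\Z^d$ ($d\ge3$). We use the classical two-sided estimate $G(x,y)\asymp(1+|x-y|)^{2-d}$. The natural space is the weighted Banach space
\[
\bX=\Big\{u:\Z^d\to\R:\ \|u\|_{\bX}:=\sup_{x\in\Z^d}(1+|x|)^{d-2}|u(x)|<\infty\Big\}.
\]
On $\bX$ we work in the closed convex set $K=\{u\in\bX:\ t\,\Gamma\le u\le 2t\,\Gamma\}$ with a small parameter $t>0$. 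We define
\[
\cT u(x)=t\,\Gamma(x)+\sum_{y\in\Z^d}G(x,y)Q_\beta(y)u(y)^q .
\]
A fixed point satisfies $-\Delta u=t\delta_0+Q_\beta u^q$, which is not quite the equation. To remove the delta we instead take $\cT u= t\Gamma_*+G*(Q_\beta u^q)$ with $\Gamma_*$ a positive harmonic-free profile. A cleaner choice is to seek $u=v$ with $v=G*(Q_\beta v^q)$ directly via a scaling argument. We will first try the following: find a fixed point $v$ of $v=\lambda^{q-1}\,G*(Q_\beta v^q)$ in the set $\{c_1\Gamma\le v\le c_2\Gamma\}$. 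Then $u=\lambda v$ solves (\ref{eq: 1.1}). The lower bound is guaranteed by $Q_\beta(0)=1$, since $G*(Q_\beta v^q)(x)\ge G(x,0)v(0)^q\ge c\,\Gamma(x)\,(c_1\Gamma(0))^q$. The normalization $\lambda$ is chosen to balance the constants.

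The key estimate is the weighted convolution bound
\[
\sum_{y}(1+|x-y|)^{2-d}(1+|y|)^{\beta-q(d-2)}\le C(1+|x|)^{2-d},
\]
which holds as soon as $q(d-2)-\beta>d$, i.e. $q>\frac{d+\beta}{d-2}$. The standard proof splits the sum into the regions $|y|\le|x|/2$, $|x-y|\le|x|/2$, and the remainder. Decay faster than $|y|^{-d}$ makes the first region contribute $C|x|^{2-d}$. The second region contributes $|x|^{\beta-q(d-2)+2}\le |x|^{2-d}$. The remainder is controlled similarly. This gives $\|G*(Q_\beta v^q)\|_{\bX}\le C_0 C^{\beta}\|v\|_{\bX}^q$ and shows $\cT$ maps $\bX$ to itself. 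The difficulty is matching the upper constant $c_2$ with the lower constant $c_1$ uniformly. Since $\Gamma(0)$ is a fixed number and the lower bound involves $v(0)^q\ge (c_1\Gamma(0))^q$ while the upper bound involves $c_2^q$, the ratio $c_2/c_1$ gets raised to the power $q$. Self-consistency then requires roughly $A\,(c_2/c_1)^q\le c_2/c_1$ after choosing $\lambda$, where $A$ is the ratio of the upper and lower Green–convolution constants. This fails for general $q$, which is exactly why a threshold $q^*$ is needed. I will therefore not work with $\Gamma$ but with the \emph{pointwise} constraint at the origin. I take $K=\{v\in\bX: v(0)=\max v\ \text{is not needed};\ \ \|v\|_{\bX}\le 1,\ v\ge \epsilon\,\Gamma\}$. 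I exploit that for large $q$ the function $v^q$ is dominated by its value at $0$. Precisely, I normalize so that $\Gamma(0)$-scaled profiles satisfy $(1+|y|)^{d-2}v(y)\le 1$ with $v(y)\le \theta^{|y|}$-type decay of $v^q$ relative to $v(0)^q$, because $(1+|y|)^{-(d-2)q}$ with $q$ large makes all $y\neq0$ contributions of relative size $2^{-(d-2)q}\sum_{y\ne0}(1+|y|)^{\beta+d-2-q(d-2)+\ldots}$. These tend to $0$ as $q\to\infty$. So for $q\ge q^*$ the map $v\mapsto G*(Q_\beta v^q)/(\text{normalization at }0)$ is a small perturbation of $v\mapsto \Gamma\, v(0)^q/v(0)^q\cdot(\cdot)$, and the two-sided bounds close. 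This choice of $q^*$, making the off-origin mass small, is the main obstacle and the step requiring care.

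The remaining steps are routine. On $K$, with the uniform weighted bound, compactness in $\bX$ follows from the tail estimate: the convolution with the $(1+|y|)^{\beta-q(d-2)}$ weight gives uniform smallness of $(1+|x|)^{d-2}\cT v(x)$'s oscillation for $|x|$ large. More precisely, I bound $(1+|x|)^{d-2}G(x,y)$ by $C$ uniformly and use $G(x,y)(1+|x|)^{d-2}\to$ a limit as $|x|\to\infty$, so $\cT(K)$ is precompact; pointwise convergence plus dominated convergence gives continuity. Schauder's theorem yields a fixed point $v$ in $K$. Then $u=\lambda v$ solves (\ref{eq: 1.1}), and $u\asymp(1+|x|)^{2-d}$ by the two-sided bounds defining $K$. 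Positivity of $u$ follows from $u\ge c\Gamma>0$.
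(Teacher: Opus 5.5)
\textbf{There is a genuine gap: the invariant convex set, which you yourself call the main obstacle, is never constructed, and as sketched it does not close.} Your overall architecture matches the paper's: a weighted sup-norm space, a map normalized at the origin, Schauder's theorem, rescaling, and the heuristic that for large $q$ the off-origin part of $v^q$ is negligible.

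The set $\{\|v\|_{\bX}\le 1,\ v\ge\epsilon\Gamma\}$ has no normalization at the origin, so nothing controls $v(y)/v(0)$ for $y\neq0$. For small $\epsilon$ it contains functions with $v(0)=\epsilon\Gamma(0)$ and $v(e_1)=2^{2-d}>v(0)$. For such $v$, $v^q$ is dominated by the neighbours of $0$, not by $0$.

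Imposing $v(0)=1$ is still not enough if you keep the profile $(1+|x|)^{2-d}$ and only the two-sided bound $\Gamma\asymp(1+|x|)^{2-d}$. The normalized map sends $\mathbf 1_{\{0\}}$ to $\Gamma/\Gamma(0)$. So an invariant envelope $v\le M(1+|x|)^{2-d}$ forces $M\ge C_1:=\sup_x(1+|x|)^{d-2}\Gamma(x)/\Gamma(0)\ge1$, and strictly more to absorb the off-origin term. Under this envelope, your argument bounds that term only by a constant times $M^q2^{-(d-2)q}$. This is small only if $M<2^{d-2}$, i.e.\ only if $C_1<2^{d-2}$. That is a sharp quantitative property of the lattice Green function, which the two-sided estimate with unspecified constants does not provide. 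It is exactly the ``constant raised to the power $q$'' problem you diagnosed earlier, and switching to the weight $(1+|x|)^{2-d}$ does not remove it.

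The missing idea is how the paper makes the envelope constant tend to $1$:
\begin{itemize}
\item It uses the normalized Green function $g_d=\Phi_d/\Phi_d(0)$ itself as the weight, so the principal term $g_d$ fits under the envelope with constant exactly $1$.
\item It proves by the maximum principle that $\rho_d=\sup_{x\neq0}g_d(x)<1$.
\item It works in $\mathcal K_q=\{w:\ w(0)=1,\ 0\le w\le\frac{q}{q-1}g_d\}$ with $\cT_q[w]=\frac{g_d+\mathcal R_q(w)}{1+\mathcal R_q(w)(0)}$.
\end{itemize}
Then $\|\mathcal R_q(w)\|_{g_d}\le(\frac{q}{q-1})^q\eta_q$. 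Here $(\frac{q}{q-1})^q\to e$ stays bounded, while $\eta_q\le\eta_{q_0}\rho_d^{\,q-q_0}$ decays geometrically; the available slack is $\frac{1}{q-1}$. Since $c_q=(q-1)^{q-1}q^{-q}\ge\frac{1}{eq}$, the condition $\eta_q\le c_q$ holds for $q\ge q^*$. That condition is exactly invariance of $\mathcal K_q$, and the fixed point then satisfies $\frac{q-1}{q}g_d\le w\le\frac{q}{q-1}g_d$ automatically.

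A secondary error is in your compactness step. You claim $(1+|x|)^{d-2}G(x,y)\le C$ uniformly in $y$, which is false; only $\le C(1+|y|)^{d-2}$ holds. The paper instead approximates $\mathcal R_q$ uniformly on bounded sets by finite-rank truncations $\mathcal R_{q,R}$. It controls the remainder with the tail bound from the weighted convolution estimate, using an exponent $d<s'<q(d-2)-\beta$.
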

Our second purpose in this article is to prove the existence of solution of the H\'enon equation in the
half integer lattice graph
\begin{equation}\label{eq:intro-half-space}
 \Z^d_+=\{x=(x_1,x')\in \mathbb{Z}^d:x_1\geq1\},
\end{equation}
with zero Dirichlet data on its complement, where $d\ge2$.

\begin{samepage}
\begin{theorem}\label{thm:intro-half-equality}
Let $d\geq2$ and $(\cQ_0)$ hold with $\beta>0$ and $x_0=(1,0,\cdots,0)\in\Z^d_+$. There exists
\[
 q^{\sharp}=q^{\sharp}(d,\beta)>
 \frac{d+1+\beta}{d-1}
\]
such that for every $q\geq q^{\sharp}$,
\[
 \begin{cases}
 -\Delta u=Q_\beta u^q&\text{in }\mathbb{Z}_+^d,\\
 \qquad u=0&\text{on }\mathbb{Z}^d\setminus\mathbb{Z}_+^d
 \end{cases}
\]
has a positive solution satisfying
\[
 \overline{u}(x)\asymp\frac{x_1}{(1+|x|)^d}\quad{\rm for}\ \,  x\in\Z^d_+.
\]
\end{theorem}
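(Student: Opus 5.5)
We will prove Theorem~\ref{thm:intro-half-equality} by a Schauder fixed point argument in a weighted space built on the Dirichlet Green kernel of $\Z^d_+$. Let $G_+(x,y)$ be the Green function of $-\Delta$ on $\Z^d_+$ with zero data on $\Z^d\setminus\Z^d_+$. We use the two-sided estimate
\[
G_+(x,y)\asymp \frac{x_1y_1}{(1+|x-y|)^{d-2}\,\big(1+|x-y|^2+x_1y_1\big)}\qquad (x,y\in\Z^d_+),
\]
which holds for $d\ge2$. It can be obtained by the reflection $G_+(x,y)=G(x-y)-G(x-\tilde y)$, $\tilde y=(-y_1,y')$, from the free Green function asymptotics when $d\ge3$, and from the potential kernel when $d=2$. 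Put $\phi(x)=x_1(1+|x|)^{-d}$. From the kernel estimate, $\phi(x)\asymp G_+(x,x_0)$, so $\phi$ is, up to constants, harmonic off $x_0$.

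The fixed point set-up is as follows. Work in the Banach space $\bX=\{u:\ \|u\|_\phi:=\sup_{\Z^d_+}|u|/\phi<\infty\}$ and define
\[
\cT u(x)=\epsilon\, G_+(x,x_0)+\sum_{y\in\Z^d_+}G_+(x,y)Q_\beta(y)\,u_+(y)^q .
\]
A positive solution of the problem is a fixed point of the map $u\mapsto \sum G_+Q_\beta u_+^q$ alone. To make sure the fixed point is nontrivial, we instead look directly for a fixed point of $\cT$ on the order interval
\[
D=\{\,c_1\epsilon\,\phi\le u\le c_2\epsilon\,\phi\,\}.
\]
We then use scaling: since the problem is superlinear, a solution $u$ of $u=\sum G_+Q_\beta u^q$ on $D$ is obtained for small $\epsilon$ if we set things up with $\cT$ without the $\epsilon G_+$ term but with the lower bound coming from the nonlinear term at $x_0$. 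The cleaner route is the following. Fix $\lambda>0$, find a fixed point $v=\lambda G_+(\cdot,x_0)+\sum G_+Q_\beta v^q$, and observe that $-\Delta v=Q_\beta v^q+\lambda\delta_{x_0}$. If the single-point source is awkward, replace it by a small compactly supported positive datum on the boundary layer $x_1=1$. In either case $v$ is a positive supersolution, and $\underline u=0$ is a subsolution. Monotone iteration $u_{k+1}=\sum G_+Q_\beta u_k^q$ starting from $u_0=v$ then decreases to a solution $u$ with $0\le u\le v$. Positivity of the limit is the delicate point. We will instead use the Dirichlet term directly: seek $u=\sum G_+Q_\beta u^q$ with $Q_\beta(x_0)=1$. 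Nontriviality follows if the iteration starts from $v$ and we show a uniform lower bound, for instance by using $Q_\beta(x_0)=1$ and choosing $\lambda$ large relative to the scaling $u\mapsto tu$.

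The key estimate, which we expect to be the main obstacle, is
\begin{equation*}
\sum_{y\in\Z^d_+}G_+(x,y)(1+|y|)^{\beta}\phi(y)^q\le C_0\,\phi(x),
\end{equation*}
and it determines $q^\sharp$. Here $(1+|y|)^\beta\phi(y)^q\le y_1^{q}(1+|y|)^{\beta-dq}$. We split $\Z^d_+$ into the regions $|y|<|x|/2$, $|y|>2|x|$, and $|y|\asymp|x|$, the last further split by $|x-y|$ compared with $x_1$. In the far region we bound $G_+\lesssim x_1y_1|y|^{-d}$, which gives $x_1\sum y_1^{q+1}|y|^{\beta-dq-d}$; this converges and is bounded by $x_1|x|^{\beta+q+1+d-1-dq-d}\lesssim \phi(x)$ once $q(d-1)>\beta+2$ or so. In the near region we bound $G_+(x,y)\lesssim x_1y_1|x|^{-d}$, which contributes $\phi(x)\sum y_1^{q+1}(1+|y|)^{\beta-dq}$, finite for $q$ large. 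In the middle region we use $G_+\lesssim\min(|x-y|^{2-d},x_1y_1|x-y|^{-d})$ (logarithmic when $d=2$); summing over $|x-y|$ gives at most $C x_1^{2}\cdot x_1^{q}|x|^{\beta-dq}\cdot x_1^{-1}$-type bounds, which are $\lesssim\phi(x)$ because $x_1\le|x|$ and $q$ is large. All exponents work out once $q\ge q^\sharp$ for a suitable $q^\sharp$ exceeding the Serrin exponent $\frac{d+1+\beta}{d-1}$, which is the threshold where the far-field sum first converges.

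To conclude, take $\cT u=t\,G_+(\cdot,x_0)+\sum G_+Q_\beta u_+^q$ on $D_t=\{t c\phi\le u\le 2tC'\phi\}$. The key estimate gives $\cT(D_t)\subset D_t$ for $t$ small, since the nonlinear term is $O(t^q)\phi$. Continuity of $\cT$ in $\|\cdot\|_\phi$ follows from dominated convergence. Compactness follows because the tail of the sum beyond $|y|>R$ is uniformly $o(1)\phi$ (we use a strictly larger $q$ margin to get decay) and finitely many points give a finite-dimensional part. Schauder then yields a fixed point $u_t$, solving $-\Delta u_t=Q_\beta u_t^q+t\delta_{x_0}$. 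To remove the source we rescale: setting $w=s u_t$ does not help directly, so instead we absorb the source. Since $Q_\beta(x_0)=1$, choose the parameters so that the equation at $x_0$ reads $-\Delta u(x_0)=u(x_0)^q$ exactly. Concretely, we run Schauder on $\cT u=\sum G_+Q_\beta u_+^q$ on the closed convex set $\{u\in\bX:\ u\le M\phi,\ u(x_0)\ge m\}$. Invariance of the lower constraint uses $\cT u(x_0)\ge G_+(x_0,x_0)u(x_0)^q\ge G_+(x_0,x_0)m^q$, which requires $m^{q-1}\ge 1/G_+(x_0,x_0)$; the upper constraint requires $C_0M^q\le M$. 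These two are compatible only when $m\le M\phi(x_0)$, and checking this compatibility for large $q$ is the remaining computation. Finally, by the discrete strong maximum principle (the lower bound $u\ge cG_+(\cdot,x_0)$ comes from the kernel) the solution is positive and $u\asymp\phi$.
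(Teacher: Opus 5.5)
\textbf{There is a genuine gap in the fixed-point step.} Your kernel bounds, the weighted estimate $\sum_y G_+(x,y)(1+|y|)^\beta\phi(y)^q\le C_0\phi(x)$, and the tail-plus-finite-rank compactness argument are all fine, and match the paper's lemmas. The problem is the fixed-point step, and it is not a ``remaining computation'': the set $\{u\le M\phi,\ u(x_0)\ge m\}$ you finally propose is empty for every $q$.

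To see this, evaluate your key estimate at $x=x_0$ and keep only the term $y=x_0$, using $Q_\beta(x_0)=1$:
\[
C_0\,\phi(x_0)\ \ge\ \sum_{y\in\Z^d_+}G_+(x_0,y)Q_\beta(y)\phi(y)^q\ \ge\ G_+(x_0,x_0)\,\phi(x_0)^q .
\]
The inequality is strict as soon as $Q_\beta\not\equiv0$ off $x_0$. Your three conditions $G_+(x_0,x_0)m^{q-1}\ge1$, $C_0M^{q-1}\le1$ and $m\le M\phi(x_0)$ would force $C_0\le G_+(x_0,x_0)\phi(x_0)^{q-1}$, which is a contradiction.

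The obstruction is structural. Any fixed point of $\mathcal N(u)=\sum_yG_+(\cdot,y)Q_\beta(y)u(y)^q$ satisfies $u(x_0)=G_+(x_0,x_0)u(x_0)^q+(\text{positive off-pole terms})$, hence $u(x_0)<G_+(x_0,x_0)^{-1/(q-1)}$. So no set carrying your lower constraint can be invariant. For a superlinear map, $0$ attracts and the nontrivial fixed point is unstable in the scaling direction, so no box bounded away from $0$ is mapped into itself.

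Your earlier variants do not close the gap either. The source term produces solutions of $-\Delta u=Q_\beta u^q+t\delta_{x_0}$, and no rescaling turns these into solutions of the homogeneous problem. Monotone iteration from a supersolution may converge to $0$, as you note yourself.

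A minor point: for $d=2$ your two-sided kernel formula is wrong near the diagonal. There $G_+(x,x)\asymp\log(1+x_1^2)$ is unbounded, and the correct form is $\log\bigl(1+\frac{x_1y_1}{1+|x-y|^2}\bigr)$. The upper bound $G_+\lesssim x_1y_1(1+|x-y|)^{-d}$ that you actually use remains valid.

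\textbf{The missing idea is to quotient out the scaling, as the paper does.}

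\emph{Setup.} Put $g=G_+(\cdot,x_0)/G_+(x_0,x_0)$, so that $g(x_0)=1$ and, by the maximum principle, $\rho=\sup_{x\ne x_0}g(x)<1$. Work on
\[
\mathcal K=\Bigl\{w:\ w(x_0)=1,\ 0\le w\le\tfrac{q}{q-1}g\Bigr\}
\]
with the $0$-homogeneous map $w\mapsto\mathcal N(w)/\mathcal N(w)(x_0)$.

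\emph{Splitting off the pole.} Since $Q_\beta(x_0)w(x_0)^q=1$, the pole contributes exactly $G_+(\cdot,x_0)$. Hence $\mathcal N(w)/G_+(x_0,x_0)=g+\mathcal R(w)$, where $\mathcal R$ collects the terms with $y\ne x_0$. The map therefore reads $\frac{g+\mathcal R(w)}{1+\mathcal R(w)(x_0)}$.

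\emph{Invariance.} Off the pole, $g^q\le\rho^{q-q_1}g^{q_1}$. This gives $\|\mathcal R(w)\|_g\le(\frac{q}{q-1})^q\eta_{q_1}\rho^{q-q_1}$. Here $\eta_{q_1}$ is the $g$-weighted norm of $G_+(x_0,x_0)^{-1}\sum_{y\ne x_0}G_+(\cdot,y)Q_\beta(y)g(y)^{q_1}$ for a fixed $q_1$ above the Serrin exponent, and it is finite by your key estimate. Since $c_q=(q-1)^{q-1}q^{-q}\ge\frac{1}{eq}$, this bound is at most $(\frac{q}{q-1})^qc_q=\frac{1}{q-1}$ for $q$ large. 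That is exactly what gives $\frac{g+\mathcal R(w)}{1+\mathcal R(w)(x_0)}\le\frac{q}{q-1}g$, so $\mathcal K$ is invariant and Schauder applies.

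\emph{Recovering a solution.} A fixed point satisfies $\mathcal N(w)=Dw$ with $D=\mathcal N(w)(x_0)$. Then $u=D^{-1/(q-1)}w$ solves $u=\mathcal N(u)$, and $\frac{q-1}{q}g\le w\le\frac{q}{q-1}g$ gives $u\asymp x_1(1+|x|)^{-d}$.

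\emph{Where large $q$ enters.} Your key estimate holds with a finite $C_0$ for every $q>\frac{d+1+\beta}{d-1}$, so it does not determine $q^\sharp$. What requires $q\ge q^\sharp$ is the exponential concentration of $g^q$ at the pole, which makes the off-pole part small compared with the pole term.
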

\end{samepage}

Theorems \ref{thm:intro-whole} and \ref{thm:intro-half-equality} establish the existence of positive solutions to the discrete H\'enon equation with a H\'enon-type potential \(Q_\beta\) (\(\beta > 0\)) when the nonlinear exponent \(q\) exceeds a sufficiently large threshold. Crucially, for H\'enon-type potentials, the variational framework developed in \cite{CHZ} is inapplicable due to the failure of the Palais-Smale (PS) condition. To overcome this obstruction, we employ fixed-point theory instead. Specifically, we introduce weighted Banach spaces \(\mathbb{X}_{g_d}\) and \(\mathbb{X}_{g_{d,+}}\) defined on \(\Omega = \mathbb{Z}^d\) and \(\mathbb{Z}^d_+\), respectively, where the weights \(g_d\) and \(g_{d,+}\) are normalized Green functions (or, in dimension two, the potential kernel in the whole-space case). The core analytical tools are Proposition \ref{lem:eta-q} and Proposition \ref{lem:half-eta-q}, which yield, for \(q \geq q^* > \frac{d+\beta}{d-2}\),
\begin{equation}\label{es-1}
\frac{1}{\Phi_{d}(0,0)} \left\| \Phi_{d} * \big(Q_\beta \, g_{d}^q \, \mathbf{1}_{\mathbb{Z}_+ \setminus \{0\}} \big) \right\|_{g_{d}} \leq (q-1)^{q-1} q^{-q},
\end{equation}
and, for \(q \geq q^\# > \frac{d+1+\beta}{d-1}\),
\begin{equation}\label{es-1+}
\frac{1}{\Phi_{d,+}(e_1,e_1)} \left\| \Phi_{d,+} * \big(Q_\beta \, g_{d,+}^q \, \mathbf{1}_{\mathbb{Z}_+^d \setminus \{e_1\}} \big) \right\|_{g_{d,+}} \leq (q-1)^{q-1} q^{-q}.
\end{equation}
These estimates ensure that the associated convolution-type operators are continuous and compact on bounded subsets of the respective weighted spaces. Leveraging this, we construct closed, bounded, convex subsets \(\mathcal{K}_q \subset \mathbb{X}_{g_d}\) and \(\mathcal{K}_{q,+} \subset \mathbb{X}_{g_{d,+}}\) and apply the Schauder fixed point theorem to obtain normalized fixed points. A subsequent scaling argument then recovers positive solutions to the original equations (\ref{eq 1.0}).

Note that our method is based on the decays at infinity of the Green kernel.  In $\Z^2$, the Green function diverges at infinity,
which prevents the existence  for the H\'enon equation. While,  in the half-space $\Z^2_+$, we can show that the  Green function decays
at infinity, which guarantees the existence of positive solution for large $q$.
  We emphasize that \(q^*\) and \(q^\#\) are not sharp thresholds but merely sufficient lower bounds guaranteeing the validity of (\ref{es-1}) and (\ref{es-1+}), respectively. Consequently, the existence of positive solutions remains open for the critical gaps \(q \in \big(\frac{d+\beta}{d-2},\, q^*\big]\) in \(\mathbb{Z}^d\) and \(q \in \big(\frac{d+1+\beta}{d-1},\, q^\#\big]\) in \(\mathbb{Z}^d_+\).

Our results are complementary to the variational existence results for \(\beta\le0\) and to the nonexistence results in the Serrin-subcritical and Serrin-critical regimes, and they provide a partial answer to the open gap  stated in the introduction concerning the case \(\beta>0\) and \(q\) beyond the Serrin exponent.\smallskip

The remainder of this paper is organized as follows. Section~2 is devoted to preliminary material.  We collect basic estimates on the whole lattice \(\mathbb Z^d\), including the Green function bounds, the weighted space \(\mathbb X_{g_d}\), and the key estimates in Proposition \ref{lem:eta-q} and Proposition \ref{lem:half-eta-q}.  Also we develop the corresponding estimtes on the half-space \(\mathbb Z_+^d\).  In Section 3, we   apply the Schauder fixed point theorem in a suitable convex set to obtain positive solutions of (\ref{eq 1.0})  in the whole lattice \(\mathbb Z^d\) with \(d\ge3\) and \(\mathbb Z^d_+\) with \(d\ge2\).   Finally, the appendix is devoted to the construction and two-sided estimates of the two-dimensional Dirichlet Green kernel on \(\mathbb Z_+^2\), which is used in Proposition  \ref{prop:two-dimensional-green}.

 \section{Important estimates}
 Notations: $|x|=\sqrt{\sum^d_{i=1}x_i^2}$, which denotes the Euclidean norm of $x\in\mathbb Z^d$;
  $A\asymp B$ means that $\frac1CA\leq B\leq CA$ for some universal constant $C\geq 1$. In the subsequent proof, we set $C$ in $(\cQ_0)$ to 1, which is not essential to the results or arguments presented in this paper.

The following maximum principle  is in the discrete setting.
 \begin{theorem}\label{thm:max}\cite[Theorem 2.1]{CH}
     For a graph $G=(V,E)$ and a (possibly infinite) connected subset $\Omega\subset V$   verifying
   either $\delta \Omega\not=\emptyset$ or $\Omega$ is unbounded,
     if  $u: \overline{\Omega}\to \R$ satisfies
\begin{equation}\label{eq 2.1 cm}
\left\{
\begin{array}{lll}
-\Delta u+c u     \geq  0   \quad
   &{\rm in}\ \  \Omega , \\[1mm]
 \phantom{ ----  }
u\geq 0 \quad &{\rm   in}\ \ \,    \delta\Omega, \\[1mm]
\liminf\limits_{d(x,0)\to \infty, x\in \Omega}u(x)\geq 0,
 \end{array}
 \right.
\end{equation} where $c:\Omega\to[0,\infty),$
then $u\geq 0$ in $\Omega$.  Furthermore, $u\equiv 0$ in $\Omega$ or $u>0$ in $\Omega$.
 \end{theorem}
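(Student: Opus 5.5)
The plan is to argue by contradiction at a global minimum. The two cases are a negative infimum and a zero infimum. Since $\liminf_{d(x,0)\to\infty}u(x)\ge 0$, if $u$ took a negative value somewhere, the infimum $m:=\inf_{\Omega}u<0$ would be attained. Indeed, pick $x_1$ with $u(x_1)<0$ and set $\varepsilon=-u(x_1)/2$. Outside a sufficiently large ball we have $u>-\varepsilon$. Inside that ball there are only finitely many vertices, because the graph is locally finite (as for $\mathbb Z^d$), so the minimum over the ball is attained. We may also assume $u\ge 0$ on $\delta\Omega$ from the hypothesis. Hence there is $z\in\Omega$ with $u(z)=m<0$ and $u\ge m$ on $\overline{\Omega}$.

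Evaluate the inequality at $z$:
\[
0\le -\Delta u(z)+c(z)u(z)=\sum_{y\sim z}\big(u(z)-u(y)\big)+c(z)u(z).
\]
Each term $u(z)-u(y)\le 0$, and $c(z)u(z)\le 0$ because $c\ge0$. So all terms vanish, and $u(y)=u(z)=m$ for every $y\sim z$. Any neighbour $y\in\delta\Omega$ would give $m=u(y)\ge 0$, a contradiction. Therefore every neighbour lies in $\Omega$ and is again a minimum point. Since $\Omega$ is connected, iterating this shows $u\equiv m$ on $\Omega$.

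This constant value leads to a contradiction in each of the two admissible configurations. If $\delta\Omega\neq\emptyset$, some $x\in\Omega$ has a neighbour $y\in\delta\Omega$, and the previous step forces $u(y)=m<0$, contradicting $u\ge0$ on $\delta\Omega$. If $\Omega$ is unbounded, then $\liminf u=m<0$, contradicting the condition at infinity. Hence $u\ge0$ in $\Omega$.

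For the strong maximum principle, suppose $u\ge0$ and $u(z)=0$ for some $z\in\Omega$. The same computation at $z$ gives $\sum_{y\sim z}(0-u(y))\ge0$, with $u(y)\ge0$, so all neighbours of $z$ in $\overline\Omega$ vanish. Connectedness then propagates this to $u\equiv0$ on $\Omega$. The only delicate point, and the main obstacle, is attaining the infimum. This relies on local finiteness of the graph and on the liminf condition, which confines the negative region to a finite set. For a general graph this would be an implicit standing assumption, namely local finiteness with a fixed reference vertex $0$.
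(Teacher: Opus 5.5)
Your argument is correct. The paper gives no proof of this statement; it simply imports it as Theorem 2.1 of the cited reference. Your minimum-point argument is the standard proof of this discrete weak and strong maximum principle, and your remark is apt that local finiteness (automatic for $\mathbb Z^d$) is what lets the $\liminf$ condition confine the negative set to a finite region, so that the infimum is attained.
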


 \subsection{Basic estimates in $\Z^d$ }

 Let $\Phi_d$ be the positive fundamental solution of $-\Delta$ on
$\Z^d$, $d\ge3$,
$$
 \begin{cases}
\qquad -\Delta\Phi_d=\delta_0&\text{in } \mathbb Z^d,\\[1mm]
\displaystyle \lim_{|x|\to\infty}\Phi_d(x)=0,
 \end{cases}
 $$
where $\delta_0$ is the Dirac mass at the origin.

The lattice Green-function estimates in Chapter~4 of \cite{LL} and
(4.10)--(4.11) of \cite{CHZ} give
\begin{equation}\label{eq:whole-green}
 c(1+|x|)^{2-d}\leq\Phi_d(x)
 \leq C(1+|x|)^{2-d}\quad{\rm for}\ \,  x\in\Z^d.
\end{equation}
Let
\[g_d(x)=\frac{\Phi_d(x)}{\Phi_d(0)}\quad {\rm for}\ \, x\in\Z^d.\]
Maximum principle implies that
$$ g_d(0)=1,\quad  g_d(x)>0\quad {\rm  and}\quad
 g_d(x)\asymp  |x|^{2-d} \ \ {\rm as} \ \, |x|\to+\infty.$$

{\it We claim that \(g_d(x)<1\) for every \(x\neq 0\).}  Indeed, choose \(R\) large enough that \(g_d<1\) outside
\(B_R\), and apply the discrete maximum principle to
\(B_R\setminus\{0\}\). Since \(g_d(0)=1\), this gives \(g_d\leq 1\) in
\(B_R\). If \(g_d(x)=1\) at some \(x\neq 0\), then the mean-value property
forces \(g_d=1\) at every neighbor of \(x\). Repeating this argument along
a path avoiding the origin would give \(g_d=1\) at points arbitrarily far
from the origin, contradicting \(g_d(x)\to 0\). Therefore,
\[
    0<g_d(x)<1,\qquad x\neq 0.
\]
Now we set
\begin{equation}\label{eq: 1}
 \rho_d=\sup_{x\in\Z^d\setminus\{0\}}g_d(x)<1
\end{equation}
and
\begin{equation}\label{norm-1}
 \|v\|_{g_d}=\sup_{x\in\Z^d}\frac{|v(x)|}{g_d(x)},
 \qquad
 \bX_{g_d}=\{v\in C(\Z^d):\|v\|_{g_d}<\infty\}.
\end{equation}

\begin{proposition}\label{lem:eta-q}
Let $d\geq3$ and $(\cQ_0)$ hold with $\beta>0$ and $x_0=0$,  and for   $q>\frac{d+\beta}{d-2}$, denote
\begin{equation}\label{eq:whole-eta-c}
 \eta_q=\frac1{\Phi_d(0)}
 \left\| \Phi_d\ast  \left(
 Q_\beta g_d^q\mathbf 1_{\Z^d\setminus\{0\}}
 \right)\right\|_{g_d},
 \qquad
 c_q=\frac{(q-1)^{q-1}}{q^q}.
\end{equation}
Then   there exists $q^*>\frac{d+\beta}{d-2}$ such that
\begin{equation}
 \eta_q\leq c_q\qquad\text{for every }q\geq q^*.
\end{equation}
\end{proposition}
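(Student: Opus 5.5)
The idea is to exploit $\rho_d<1$ from (\ref{eq: 1}): away from the origin $g_d^q$ is geometrically small in $q$, while $c_q=(q-1)^{q-1}q^{-q}\sim \frac{1}{eq}$ decays only like $1/q$. So it suffices to show $\eta_q\le C\rho^{\,q}$ for some $\rho<1$ and all large $q$.

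\emph{Step 1 (pointwise bound on the source).} Put $f_q=Q_\beta g_d^q\mathbf 1_{\Z^d\setminus\{0\}}$. Using $(\cQ_0)$ with $C=1$ and the Green bound (\ref{eq:whole-green}), we have $g_d(y)\le C_1(1+|y|)^{2-d}$. Fix $q_0>\frac{d+\beta}{d-2}$, say $q_0=\frac{d+\beta}{d-2}+1$. For $q\ge q_0$ and $y\ne0$, split $g_d^q=g_d^{q-q_0}g_d^{q_0}\le \rho_d^{\,q-q_0}g_d^{q_0}$. This gives
\[
 f_q(y)\le \rho_d^{\,q-q_0}\,C_1^{q_0}(1+|y|)^{\beta-(d-2)q_0}=:\rho_d^{\,q-q_0}F(y),
\]
where $F$ has decay exponent $(d-2)q_0-\beta\ge d+(d-2)>d$.

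\emph{Step 2 (the convolution is bounded by $g_d$).} I will show that $\Phi_d*F\le C_2(1+|x|)^{2-d}$. This is the standard discrete convolution estimate
\[
 \sum_y (1+|x-y|)^{2-d}(1+|y|)^{-\gamma}\lesssim (1+|x|)^{2-d}\quad\text{for }\gamma>d.
\]
To prove it, split the sum into three regions: $|y|\le|x|/2$, $|x-y|\le|x|/2$, and the remainder. In the first region the kernel is comparable to $|x|^{2-d}$ and $F$ is summable. In the second region $(1+|y|)^{-\gamma}\lesssim|x|^{-\gamma}$, and the sum of $(1+|x-y|)^{2-d}$ over a ball of radius $|x|/2$ is at most $|x|^2$, giving $|x|^{2-\gamma}\le|x|^{2-d}$. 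In the third region the summand is at most $(1+|y|)^{2-d-\gamma}$, and its tail over $|y|\ge |x|/2$ is $\lesssim|x|^{2-\gamma}$. Combining this with $g_d\ge c(1+|x|)^{2-d}$ gives
\[
 \eta_q\le \frac{C_2}{c\,\Phi_d(0)}\,\rho_d^{\,q-q_0}=:M\rho_d^{\,q}.
\]

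\emph{Step 3 (conclusion).} Since
\[
 c_q=\frac1q\Bigl(1-\frac1q\Bigr)^{q-1}\ge \frac{1}{eq}\quad\text{for } q>1,
\]
it suffices that $M\rho_d^{\,q}\le \frac{1}{eq}$. This holds for all $q\ge q^*$ once $q^*\ge q_0$ is large, because $q\rho_d^{\,q}\to0$ and this function is eventually decreasing. Since $q^*\ge q_0>\frac{d+\beta}{d-2}$, the threshold has the required property.

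The main obstacle is Step 1. The constant $M$ must be independent of $q$, which is why the exponent $q_0$ is frozen before estimating $F$, and the strict bound $\rho_d<1$ is essential. The convolution estimate in Step 2 is routine, but care is needed near $y=0$ and $y=x$; this is handled by using $(1+|\cdot|)$ weights throughout.
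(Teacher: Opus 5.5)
Your proposal is correct and follows essentially the same route as the paper. You freeze $q_0$, use $g_d^q\le\rho_d^{\,q-q_0}g_d^{q_0}$ off the origin to get $\eta_q\lesssim\rho_d^{\,q-q_0}$, bound the frozen convolution by the weighted Green estimate, and compare with $c_q\ge\frac{1}{eq}$. The only difference is that you reprove the convolution bound (the paper's Corollary~\ref{cor:weighted-green}) with a three-region split where the paper uses two regions; your remark that $q\rho_d^{\,q}$ is eventually decreasing is unnecessary, since $q\rho_d^{\,q}\to0$ already suffices.
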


\begin{lemma}\label{lm 3.1}
 Let  $d\geq 1$,  $s>d$ and $0<a\leq d$. Then for all $x\in\Z^d$,
\begin{equation}\label{eq:weighted-convolution}
 \sum_{y\in\Z^d}(1+|x-y|)^{-a}(1+|y|)^{-s}
 \leq C(1+|x|)^{-a},
\end{equation}
where $C>0$ depends only on $d$, $a$, and $s$.
\end{lemma}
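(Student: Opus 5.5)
The plan is the standard splitting of the sum into three regions according to the relative size of $|y|$ and $|x|$. If $|x|\le 1$ there is nothing to prove: $(1+|x-y|)^{-a}\le 1$ and $\sum_y (1+|y|)^{-s}<\infty$ because $s>d$. So assume $|x|\ge 1$ and write $\Z^d=A_1\cup A_2\cup A_3$, where
\[
A_1=\{y:|y|\le |x|/2\},\qquad A_2=\{y:|x-y|\le |x|/2\},\qquad A_3=\Z^d\setminus(A_1\cup A_2).
\]

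On $A_1$ we have $|x-y|\ge |x|/2$, hence $(1+|x-y|)^{-a}\le 2^a(1+|x|)^{-a}$. The remaining factor sums to at most $\sum_{y\in\Z^d}(1+|y|)^{-s}=C(d,s)<\infty$, so this piece is bounded by $C(1+|x|)^{-a}$.

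On $A_2$ we have $|y|\ge |x|/2$, so $(1+|y|)^{-s}\le 2^s(1+|x|)^{-s}$. What is left is
\[
\sum_{|z|\le |x|/2}(1+|z|)^{-a}\lesssim \int_0^{|x|}(1+r)^{d-1-a}\,dr,
\]
estimated by counting lattice points in dyadic shells. This is $\lesssim (1+|x|)^{d-a}$ if $a<d$ and $\lesssim \log(2+|x|)$ if $a=d$. This is the step where the hypotheses $a\le d$ and $s>d$ are really used, and I expect the borderline $a=d$ to be the only point needing care. In both cases the contribution is at most
\[
C(1+|x|)^{-s}\big[(1+|x|)^{d-a}+\log(2+|x|)\big]\le C(1+|x|)^{-a},
\]
because $s>d$ makes $(1+|x|)^{d-s}$, and also $(1+|x|)^{a-s}\log(2+|x|)$ when $a=d$, bounded.

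On $A_3$ we have both $|y|>|x|/2$ and $|x-y|>|x|/2$. In addition $|x-y|\le |x|+|y|\le 3|y|$, so $1+|x-y|\le 3(1+|y|)$ and $(1+|x-y|)^{-a}\ge 3^{-a}(1+|y|)^{-a}$. That lower bound points the wrong way, so instead I would simply use $(1+|x-y|)^{-a}\le 2^a(1+|x|)^{-a}$ on this region. The remaining sum is again at most $\sum_{y\in\Z^d}(1+|y|)^{-s}<\infty$. Adding the three pieces gives the lemma, with a constant depending only on $d$, $a$, $s$.
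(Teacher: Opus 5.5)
Your proof is correct and is essentially the paper's argument: for $|x|>8$ the paper splits into $|x-y|\ge |x|/4$ and $|x-y|<|x|/4$, bounds the kernel by $C(1+|x|)^{-a}$ on the far part, and on the near part uses $(1+|y|)^{-s}\lesssim(1+|x|)^{-s}$ together with the ball sum $\sum_{|z|\lesssim|x|}(1+|z|)^{-a}$ (power or logarithm), exactly as you do. Your regions $A_1$ and $A_3$ both lie in $\{|x-y|\ge |x|/2\}$ and receive the same estimate, so your three-way split amounts to that same two-way split.
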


\begin{proof}
We first record two elementary estimates. Since
\[
 |\{z\in\Z^d:j\leq|z|<j+1\}|\leq C(1+j)^{d-1},
\]
the sum $\sum_z(1+|z|)^{-s}$ converges, and for every $R\geq4$,
\begin{equation}\label{eq:lattice-shell-sum}
 \sum_{|z|\leq R}(1+|z|)^{-a}
 \leq C\begin{cases}
 (1+R)^{d-a},&0<a<d,\\[1mm]
 \log(2+R),&a=d.
 \end{cases}
\end{equation}

We now prove \eqref{eq:weighted-convolution}. If $|x|\leq 8$, then
$(1+|x|)^{-a}>9^{-a}$ and $(1+|x-y|)^{-a}\leq1$, so
\[
 \sum_y(1+|x-y|)^{-a}(1+|y|)^{-s}
 \leq\sum_y(1+|y|)^{-s}
 \leq C
 \leq C9^a (1+|x|)^{-a}.
\]
Thus \eqref{eq:weighted-convolution} holds in this case.

Assume now that $|x|>8$ and $R=\frac12 |x|$. Split the sum into the two regions
$|x-y|\geq \frac{R}{2}$ and $|x-y|<\frac{R}{2}$. On the first region,
\[
 \sum_{|x-y|\geq \frac{R}{2}}
 (1+|x-y|)^{-a}(1+|y|)^{-s}
 \leq CR^{-a}\sum_y(1+|y|)^{-s}
 \leq CR^{-a}.
\]
On the second region, we have $|y|>\frac{R}{2}$. Setting $z=x-y$ and using
\eqref{eq:lattice-shell-sum}, we obtain
\begin{align*}
 \sum_{|x-y|<\frac{R}{2}}
 (1+|x-y|)^{-a}(1+|y|)^{-s}
 &\leq CR^{-s}\sum_{|z|<R/2}(1+|z|)^{-a}\\
 &\leq C
 \begin{cases}
  R^{d-a-s},&0<a<d,\\[1mm]
  R^{-s}\log(2+R),&a=d.
 \end{cases}
\end{align*}
Since $s>d$, both expressions are bounded by $CR^{-a}$.
Combining the estimates for the two regions gives
\eqref{eq:weighted-convolution}.
\end{proof}
 From \eqref{eq:whole-green} and
\eqref{eq:weighted-convolution}, we have the following corollary
\begin{corollary}\label{cor:weighted-green}
Let $d\geq 3$,  $s>d$, then there exists $C=C(d,s)>0$ such that
\begin{equation}\label{eq:whole-weighted-green}
 \sum_{y\in\Z^d}\Phi_d(x-y)(1+|y|)^{-s}
 \leq C(1+|x|)^{2-d}.
\end{equation}
\end{corollary}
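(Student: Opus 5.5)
The corollary follows directly from Lemma \ref{lm 3.1}, applied with $a=d-2$. The two-sided Green estimate \eqref{eq:whole-green} gives the pointwise bound $\Phi_d(x-y)\leq C(1+|x-y|)^{2-d}$ for all $x,y\in\Z^d$, with $C$ depending only on $d$. Multiplying by $(1+|y|)^{-s}$ and summing over $y$, I obtain
\[
 \sum_{y\in\Z^d}\Phi_d(x-y)(1+|y|)^{-s}\leq C\sum_{y\in\Z^d}(1+|x-y|)^{-(d-2)}(1+|y|)^{-s}.
\]
All terms are nonnegative, so this comparison of series is legitimate, and convergence of the right-hand side is part of the lemma's conclusion.

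Next I check the hypotheses of Lemma \ref{lm 3.1} with $a=d-2$. Since $d\geq3$, we have $0<a=d-2\leq d$, and $s>d$ is assumed. The lemma then bounds the right-hand sum by $C(d,d-2,s)(1+|x|)^{-(d-2)}=C(1+|x|)^{2-d}$. Combining the two constants yields \eqref{eq:whole-weighted-green} with $C=C(d,s)$.

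There is no real obstacle. The only point needing care is that $d\geq3$ is exactly what keeps $a=d-2$ positive; for $d=2$ the Green function does not decay and the estimate \eqref{eq:whole-green} is unavailable. The constant's dependence only on $d$ and $s$ follows because the constant in \eqref{eq:whole-green} depends only on $d$.
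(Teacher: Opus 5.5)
Your proposal is correct and is the same argument as the paper: insert the upper bound $\Phi_d(x-y)\le C(1+|x-y|)^{2-d}$ from \eqref{eq:whole-green} into Lemma~\ref{lm 3.1} with $a=d-2$. The only hypothesis to check is $a=d-2\in(0,d]$, which holds since $d\geq 3$, and you verified it.
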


\begin{proof}[Proof of Proposition \ref{lem:eta-q}.]
Fix $q_0>\frac{d+\beta}{d-2}$, then  we have $\eta_{q_0}<\infty$. For $y\neq 0$, we have
\[
 g_d(y)^q
 =g_d(y)^{q_0}g_d(y)^{q-q_0}
 \leq
 \rho_d^{\,q-q_0}g_d(y)^{q_0}.
\]
So we have
\[
 0\leq
 \Phi_d\ast \!\left(
  Q_\beta g_d^q\mathbf 1_{\Z^d\setminus\{0\}}
 \right)
 \leq
 \rho_d^{\,q-q_0}
 \Phi_d\ast \!\left(
  Q_\beta g_d^{q_0}\mathbf 1_{\Z^d\setminus\{0\}}.
 \right)
\]
 By Corollary \ref{cor:weighted-green} and assumption $(\cQ_0)$,  we derive  that
$$0<\Phi_d\ast \!\left(
  Q_\beta g_d^{q_0}\mathbf 1_{\Z^d\setminus\{0\}}
 \right)(x) \leq C(d,q_0)(1+|x|)^{2-d}<+\infty,  $$
hence  $\eta_{q_0}<+\infty$ and
\begin{equation}\label{eq:whole-eta-decay}
 \eta_q\leq\eta_{q_0}\rho_d^{q-q_0}.
\end{equation}
On the other hand, for $q>1$, we have $1-t>0$ for every
$t\in[0,\frac{1}{q}]$. Moreover,
\[
\frac{d}{dt}\bigl[-\ln(1-t)\bigr]=\frac{1}{1-t}.
\]
Therefore, by the fundamental theorem of calculus,
\[
\begin{aligned}
-\ln\left(1-\frac{1}{q}\right)
&=\int_0^{\frac{1}{q}}\frac{dt}{1-t}.
\end{aligned}
\]
Note that
\[
\frac{1}{1-t}\le \frac{q}{q-1}\quad \text{for every $t\in[0,\frac{1}{q}]$}.
\]
Consequently,
\begin{align*}
-\ln\left(1-\frac{1}{q}\right)
&=\int_0^{1/q}\frac{dt}{1-t} \le \int_0^{1/q}\frac{q}{q-1}\,dt= \frac{1}{q-1}.
\end{align*}
 Multiplying both sides by $q-1>0$, we obtain
\[
(q-1)\ln\left(1-\frac{1}{q}\right)\ge -1,
\]
which leads to
\[
\left(1-\frac{1}{q}\right)^{q-1}\ge e^{-1}.
\]
Therefore,
\[
c_q
=\frac{1}{q}\left(1-\frac{1}{q}\right)^{q-1}
\ge\frac{1}{eq}.
\]

Since $\rho_d<1$ and $q>q_0$, we have
\[
\frac{\eta_q}{c_q}
\leq
e q\,\eta_{q_0}\rho_d^{q-q_0}
\rightarrow0
\qquad\text{as }q\to\infty,
\]
so there exists
$q^*>q_0$ such that
\begin{equation}\label{eq:whole-testing}
 \eta_q\leq c_q\qquad\text{for every }q\geq q^*.
\end{equation}
We complete the proof. \end{proof}

\subsection{Basic estimates in $\Z^d_+$}
We first consider $d\geq3$. For
$y=(y_1,y')\in\mathbb Z_+^d$, put
$y^*=(-y_1,y')$. Define
\begin{equation}\label{eq:half-reflection}
\Phi_{d,+}(x,y)=
\begin{cases}
\Phi_d(x-y)-\Phi_d(x-y^*),
   &x\in\mathbb Z_+^d,\\[1mm]
0,&x\in\mathbb Z^d\setminus\mathbb Z_+^d,
\end{cases}
\end{equation}
where we recall that
$$\mathbb{Z}^d_+=\{x=(x_1,x')\in\mathbb Z\times \Z^{d-1}: x_1>0\}. $$
By the reflection principle, this is the Dirichlet
Green kernel on $\mathbb Z_+^d$. In particular,
\[
\Phi_{d,+}(x,y)>0,
\qquad x,y\in\mathbb Z_+^d,
\]
and
\[
-\Delta_{x}\Phi_{d,+}(x,y)=\delta_y(x),
\qquad x,y\in\mathbb Z_+^d.
\]
It follows by Theorem~9.3 of \cite{GHHS} that
\begin{equation}\label{eq:half-green-global}
 \Phi_{d,+}(x,y)\asymp
 (1+|x-y|)^{2-d}
 \left(1\wedge\frac{x_1y_1}{(1+|x-y|)^2}\right),\quad x,y\in \mathbb{Z}^d_+.
\end{equation}

In particular,
\begin{equation}\label{eq:half-green-upper}
\Phi_{d,+}\leq Cx_1y_1(1+|x-y|)^{-d}.
\end{equation}

When $d=2$, Let $A$ be the potential kernel defined in
\eqref{eq:two-potential-definition}. For
$y=(y_1,y_2)\in\mathbb Z_+^2$, put
$y^*=(-y_1,y_2)$ and define
\[
\Phi_{2,+}(x,y)=
\begin{cases}
A(x-y^*)-A(x-y),&x\in\mathbb Z_+^2,\\[1mm]
0,&x\notin\mathbb Z_+^2.
\end{cases}
\]
Then we have the following Green kernel estimates on half space.

\begin{proposition}\label{prop:two-dimensional-green}
  There are constants $C>c>0$ such that
\begin{equation}\label{eq:two-green-logarithmic}
 c\log\!\left(1+\frac{x_1y_1}{1+|x-y|^2}\right)
 \leq \Phi_{2,+}(x,y)
 \leq C\log\!\left(1+\frac{x_1y_1}{1+|x-y|^2}\right)\quad {\rm for}\ x,y\in\Z^2_{+}
\end{equation}
and
$$
\Phi_{2,+}(x,y)=\Phi_{2,+}(y,x)\quad {\rm for}\ x,y\in\Z^2_{+}.
$$
Particularly,
\begin{equation}\label{eq:two-pole-appendix}
 \frac{\Phi_{2,+}(x,e_1)}{\Phi_{2,+}(e_1,e_1)}
 \asymp\frac{x_1}{(1+|x|)^2}.
\end{equation}
\end{proposition}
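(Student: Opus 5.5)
Since $\Phi_{2,+}$ is built from the potential kernel by the reflection $y\mapsto y^*$, I will work with the classical asymptotics of $A$ on $\Z^2$,
\[
A(z)=\frac{2}{\pi}\log|z|+\kappa+O(|z|^{-2}),\qquad A(0)=0,
\]
for some constant $\kappa$. The normalization of $A$ fixed in \eqref{eq:two-potential-definition} only changes constant factors. I will use $\Delta A=\delta_0$ up to the sign convention matching $-\Delta_x\Phi_{2,+}(\cdot,y)=\delta_y$. The proof then has three steps.

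\textbf{Symmetry.} This is immediate from $A(z)=A(-z)$. Indeed $|x-y^*|=|x^*-y|$, since reflecting both points in the first coordinate preserves distance. Hence
\[
A(x-y^*)-A(x-y)=A(y-x^*)-A(y-x)=\Phi_{2,+}(y,x).
\]

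\textbf{Two-sided bound.} Note that
\[
|x-y^*|^2=|x-y|^2+4x_1y_1,
\qquad\text{so}\qquad
\frac{|x-y^*|^2}{|x-y|^2}=1+\frac{4x_1y_1}{|x-y|^2}.
\]
If $|x-y|$ is large, the asymptotics give
\[
\Phi_{2,+}(x,y)=\frac1\pi\log\Bigl(1+\frac{4x_1y_1}{|x-y|^2}\Bigr)+O(|x-y|^{-2}).
\]
I split into two regimes.

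\emph{Far regime.} Suppose $x_1y_1\ge \epsilon|x-y|^2$ for a small fixed $\epsilon$. The main term is then bounded below by a constant, and it dominates the error once $|x-y|$ is large.

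\emph{Near-boundary regime.} Suppose $x_1y_1\le\epsilon|x-y|^2$. The main term is of size $x_1y_1/|x-y|^2\ge |x-y|^{-2}$, which is comparable to the error term. This is the main obstacle. A first-order expansion with an $O(|z|^{-2})$ remainder cannot capture $\Phi_{2,+}$ when $x_1y_1$ is of order $1$ and $|x-y|$ is large.

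To handle the near-boundary regime I will use a more refined argument. Let $e_1=(1,0)$ and fix $y$. The function $h=\Phi_{2,+}(\cdot,y)$ is harmonic away from $y$ and vanishes on $\{x_1=0\}$. It is also a discrete difference of $A$ along the segment from $y$ to $y^*$:
\[
h(x)=\sum_{k=-y_1}^{y_1-1}\Bigl(A\bigl(x-(k,y_2)-e_1\bigr)-A\bigl(x-(k,y_2)\bigr)\Bigr),
\]
where the intermediate terms telescope. Each summand is a discrete derivative of $A$. With the expansion of $\nabla A$ to order $O(|z|^{-3})$, which is also classical (Lawler--Limic), each summand equals $\frac{2}{\pi}\frac{(x_1-k)}{|x-(k,y_2)|^2}+O(|x-y|^{-3})$ when $|x-y|\gg y_1$. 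Summing over $k$ gives
\[
h(x)=\frac{4}{\pi}\,\frac{x_1y_1}{|x-y|^2}+O\bigl(y_1|x-y|^{-3}+y_1^2\,x_1|x-y|^{-4}\bigr).
\]
By symmetry I may assume $y_1\le x_1$. Then the error is smaller than the main term, because $x_1\ge1$ and $y_1/|x-y|\le\epsilon'$. This yields the comparability with $\frac{x_1y_1}{1+|x-y|^2}\asymp\log\bigl(1+\frac{x_1y_1}{1+|x-y|^2}\bigr)$.

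\textbf{Bounded region and the pole estimate.} In the remaining bounded region where $|x-y|\le R$ and $x_1y_1\le \epsilon |x-y|^2$, only finitely many configurations occur up to translation in $x_2$. Positivity of $\Phi_{2,+}$ (by Theorem \ref{thm:max}) gives comparability with constants. On the diagonal, $\Phi_{2,+}(x,x)=A(2x_1e_1)\asymp\log(1+x_1^2)$, which matches the logarithmic form; the general close case $|x-y|\le R$ with $x_1y_1$ large follows the same way from the asymptotics. Finally, \eqref{eq:two-pole-appendix} follows by taking $y=e_1$. In that case
\[
\log\Bigl(1+\frac{x_1}{1+|x-e_1|^2}\Bigr)\asymp \frac{x_1}{(1+|x|)^2},
\]
because the argument $\frac{x_1}{1+|x-e_1|^2}$ is bounded, and $\Phi_{2,+}(e_1,e_1)=A(2e_1)>0$ is a fixed constant.
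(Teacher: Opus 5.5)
\textbf{There is a gap in the near-boundary step.} Your overall plan can be made to work: asymptotics of $A$ away from the boundary, a telescoping sum near the boundary, and finiteness plus positivity in a bounded range. But the near-boundary step fails as written, and it fails exactly in the case you flagged as the main obstacle ($x_1y_1=O(1)$, $|x-y|$ large).

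The claim that each summand equals $\frac{2}{\pi}\frac{x_1-k}{|x-(k,y_2)|^2}+O(|x-y|^{-3})$ is false. A unit difference of $A$ matches a derivative at the \emph{midpoint}, not at the endpoint. Already for the leading term,
\[
\log|z|-\log|z-e_1|=\frac{z_1}{|z|^2}+\frac{z_1^2-z_2^2}{2|z|^4}+O(|z|^{-3}),
\]
so the per-summand error is of exact order $|z|^{-2}$. Summed over the $2y_1$ terms, this gives an error $O(y_1|x-y|^{-2})$, while the main term has size $x_1y_1|x-y|^{-2}$. When $x_1=O(1)$ the two are of the same order, so no lower bound follows. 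Concretely, take $x=(1,L)$ and $y=e_1$, in your normalization. Your two summands give $\frac{2}{\pi}\cdot\frac{3}{L^2}$ to leading order, while the true value is $\frac{2}{\pi}\cdot\frac{2}{L^2}$. Separately, your displayed telescoping sum equals $-h(x)$, not $h(x)$.

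\textbf{How to repair it.} Do not expand the logarithm. The difference estimate you invoke should be used in the form
\[
A(z)-A(z-e_1)=\frac{2}{\pi}\bigl(\log|z|-\log|z-e_1|\bigr)+O(|z|^{-3}).
\]
The sum then telescopes back to
\[
h(x)=\frac{1}{\pi}\log\Bigl(1+\frac{4x_1y_1}{|x-y|^2}\Bigr)+O\bigl(y_1|x-y|^{-3}\bigr),
\]
whose relative error in the near-boundary regime is $O\bigl(1/(x_1|x-y|)\bigr)$. This difference estimate does not follow from the expansion $A(z)=\frac{2}{\pi}\log|z|+\kappa+O(|z|^{-2})$ you started from. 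It needs the finer expansion in which the $|z|^{-2}$ term is an explicit smooth homogeneous function, so that its unit differences are $O(|z|^{-3})$. That is a genuinely deeper input from the literature.

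\textbf{Smaller point on symmetry.} The identity $|x-y^*|=|x^*-y|$ does not settle anything, because $A$ is not radial. What you need is the lattice invariance $A(m,n)=A(-m,n)=A(m,-n)$, which gives $A(x-y^*)=A(y-x^*)$ directly.

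\textbf{Comparison with the paper.} The paper also writes $\Phi_{2,+}$ as a telescoping sum of horizontal differences of $A$, but it never uses asymptotics. With $k=x_2-y_2$, $b=|x_1-y_1|$ and $N=\min\{x_1,y_1\}$, it writes
\[
\Phi_{2,+}(x,y)=\sum_{j=0}^{N-1}P_{b+2j+1}(k),\qquad P_h(k)=A(h+1,k)-A(h-1,k),
\]
so every step has $h\geq1$. From the Fourier formula it derives the exact identity $P_h(k)=\frac{1}{2\pi}\int_{-\pi}^{\pi}e^{-h\alpha(\varphi)}\cos(k\varphi)\,d\varphi$, where $\cosh\alpha=2-\cos\varphi$. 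Convexity of $e^{-h\alpha}$ and two integrations by parts then give $P_h(k)\asymp h/(h^2+k^2)$, uniformly in $h\ge1$ and $k\in\Z$. Comparing the sum with $\int_b^{x_1+y_1}\frac{t\,dt}{1+k^2+t^2}=\frac12\log\bigl(1+\frac{4x_1y_1}{1+|x-y|^2}\bigr)$ finishes the proof.

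That route is self-contained and uniform: there are no regimes and no compactness step, and positivity comes for free from $P_h>0$ rather than from the maximum principle. Your route, once repaired, gives the leading-order asymptotics with the explicit constant as $|x-y|\to\infty$. The price is the refined potential-kernel expansion and a three-regime case analysis.
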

The proof of Proposition \ref{prop:two-dimensional-green} is addressed {\it in the Appendix}.

For \(d=2\), estimate~\eqref{eq:two-green-logarithmic} gives
\[
\begin{aligned}
\Phi_{2,+}(x,y)
&\le C\log\left(
1+\frac{x_1y_1}{1+|x-y|^2}
\right)\\
&\le C\frac{x_1y_1}{1+|x-y|^2}\\
&\le 2C\frac{x_1y_1}{(1+|x-y|)^2},
\qquad \forall\, x,y\in\mathbb Z_+^2.
\end{aligned}
\]
Here we used \(\log(1+t)\le t\) for \(t\ge0\) and
\[
(1+r)^2\le 2(1+r^2),
\qquad r\ge0.
\]
Consequently, after adjusting the constant,
the upper bound~\eqref{eq:half-green-upper}
holds for every \(d\ge2\).

Let
\[ g_{d,+}(x)=\frac{\Phi_{d,+}(x,e_1)}{\Phi_{d,+}(e_1,e_1)},\quad x\in \mathbb{Z}^d_+.\]
Maximum principle implies that
$$
g_{d,+}(e_1)=1,\quad g_{d,+}(x)>0,\quad {\rm  and}\quad
 g_{d,+}\asymp \frac{x_1}{(1+|x|)^d} \ \ {\rm as} \ \, |x|\to+\infty.
$$
By the maximum principle,  we derive that
$$
0<g_{d,+}(x)<1\quad{\rm for}\ x\in \mathbb{Z}^d_+\setminus\{e_1\}.
$$
Now we set
\begin{equation}\label{eq:half-rho}
\rho_{d,+}=\sup_{x\in\mathbb{Z}^d_+\setminus\{e_1\}}g_{d,+}(x)<1
\end{equation}
and
\begin{equation}\label{norm-2}
 \|v\|_{g_{d,+}}=\sup_{x\in\Z^d_+}\frac{|v(x)|}{g_{d,+}(x)},
 \qquad
 \bX_{g_{d,+}}=\big\{v\in C(\Z^d_+):\|v\|_{g_{d,+}}<+\infty\big\}.
\end{equation}

\begin{proposition}\label{lem:half-eta-q}
Let $d\geq2$ and $(\cQ_0)$ hold with $\beta>0$ and $x_0=(1,0\cdots,0)$. For  $q>\frac{d+1+\beta}{d-1}$,  denote
\begin{equation}\label{eq:half-eta-c}
\eta_{q,+}=
 \frac1{\Phi_{d,+}(e_1,e_1)}
 \left\|\Phi_{d,+}*\left(Q_\beta g_{d,+}^q\mathbf 1_{\Z_+^d\setminus\{e_1\}}\right)\right\|_{g_{d,+}},
 \qquad
 c_q=\frac{(q-1)^{q-1}}{q^q}.
\end{equation}
Then there exists $q^{\sharp}>\frac{d+1+\beta}{d-1}$ such that
\[
\eta_{q,+}\leq c_q\qquad\text{for }\ q\geq q^{\sharp}.
\]
\end{proposition}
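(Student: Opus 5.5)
The plan is to follow the proof of Proposition \ref{lem:eta-q} verbatim, with the half-space kernel and weight in place of $\Phi_d$ and $g_d$. Two ingredients are needed. First, finiteness: $\eta_{q_0,+}<\infty$ for some fixed $q_0>\frac{d+1+\beta}{d-1}$. Second, geometric decay in $q$: $\eta_{q,+}\le \eta_{q_0,+}\rho_{d,+}^{\,q-q_0}$. Once both are in hand, the elementary bound $c_q\ge \frac{1}{eq}$ from the previous proof gives
\[
\frac{\eta_{q,+}}{c_q}\le e\,q\,\eta_{q_0,+}\rho_{d,+}^{\,q-q_0}\to0 \qquad (q\to\infty),
\]
because $\rho_{d,+}<1$ by \eqref{eq:half-rho}. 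This produces the required $q^\sharp>q_0$.

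The decay step is pointwise. For $y\neq e_1$ we have $g_{d,+}(y)^q\le \rho_{d,+}^{\,q-q_0}g_{d,+}(y)^{q_0}$. Positivity of $\Phi_{d,+}$ then lets us pull this factor out of the convolution, and taking the $\|\cdot\|_{g_{d,+}}$ norm gives the inequality.

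The main work is finiteness, namely the bound
\[
\sum_{y\in\Z^d_+\setminus\{e_1\}}\Phi_{d,+}(x,y)\,Q_\beta(y)\,g_{d,+}(y)^{q_0}\le C\,\frac{x_1}{(1+|x|)^d}.
\]
I would argue as follows.
\begin{itemize}
\item By $(\cQ_0)$ and $g_{d,+}(y)\asymp y_1(1+|y|)^{-d}$, the summand is bounded by $C\,\Phi_{d,+}(x,y)\,y_1^{q_0}(1+|y|)^{\beta-dq_0}$.
\item Applying \eqref{eq:half-green-upper}, valid for all $d\ge2$, the sum is at most
\[
C x_1\sum_y (1+|x-y|)^{-d}\, y_1^{1+q_0}(1+|y|)^{\beta-dq_0}.
\]
\item Since $y_1\le 1+|y|$, this is at most $C x_1\sum_y(1+|x-y|)^{-d}(1+|y|)^{-s}$ with $s=dq_0-\beta-1-q_0=(d-1)q_0-1-\beta$.
\item The condition $s>d$ is exactly $q_0>\frac{d+1+\beta}{d-1}$.
\item Lemma \ref{lm 3.1} with $a=d$ then gives the bound $C x_1(1+|x|)^{-d}\asymp g_{d,+}(x)$, which is what we want.
\end{itemize}

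I expect the main obstacle to be the bookkeeping near the boundary and for small $|x|$. Using the crude bound \eqref{eq:half-green-upper} everywhere, including near the diagonal, is legitimate because it is a global upper bound. The comparison $g_{d,+}\asymp x_1(1+|x|)^{-d}$ must hold on all of $\Z^d_+$, not only as $|x|\to\infty$. This follows from \eqref{eq:half-green-global} with $y=e_1$, and from \eqref{eq:two-pole-appendix} when $d=2$, together with $g_{d,+}>0$ on the finitely many remaining points. With that in hand, the argument closes exactly as in the whole-space case.
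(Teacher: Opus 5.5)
Your proposal is correct and follows the paper's proof: fix $q_1>\frac{d+1+\beta}{d-1}$, get $\eta_{q_1,+}<\infty$ from the Green upper bound \eqref{eq:half-green-upper} and $y_1\le 1+|y|$ via Lemma \ref{lem:half-weighted-green} (which is your absorption of the $y_1$ factors plus Lemma \ref{lm 3.1} with $a=d$), then use $g_{d,+}^q\le\rho_{d,+}^{\,q-q_1}g_{d,+}^{q_1}$ off $e_1$ and $c_q\ge\frac{1}{eq}$. The only difference is that you write out the finiteness step and the global comparison $g_{d,+}\asymp x_1(1+|x|)^{-d}$, both of which the paper merely asserts.
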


\begin{lemma}\label{lem:half-weighted-green}
Let $d\geq 2$,  $s>d+1$ and for all $x\in \mathbb{Z}^d_+$, we have
\[
 \sum_{y\in\Z^d_+}\Phi_{d,+}(x,y)(1+|y|)^{-s}\le Cx_1(1+|x|)^{-d}\leq Cg_{d,+}(x).
\]
\end{lemma}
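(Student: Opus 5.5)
The plan is to use the upper bound \eqref{eq:half-green-upper}, which by the remark after Proposition \ref{prop:two-dimensional-green} holds for all $d\ge2$:
\[
\Phi_{d,+}(x,y)\le Cx_1y_1(1+|x-y|)^{-d}.
\]
With it,
\[
\sum_{y\in\Z^d_+}\Phi_{d,+}(x,y)(1+|y|)^{-s}\le Cx_1\sum_{y\in\Z^d_+}(1+|x-y|)^{-d}\,y_1(1+|y|)^{-s}.
\]
Since $y_1\le |y|\le 1+|y|$, the last sum is bounded by $\sum_{y\in\Z^d}(1+|x-y|)^{-d}(1+|y|)^{-(s-1)}$. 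Here $s-1>d$, so Lemma \ref{lm 3.1} applies with $a=d$ and exponent $s-1$. It gives the bound $C(1+|x|)^{-d}$, and hence the first inequality $\le Cx_1(1+|x|)^{-d}$.

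For the second inequality I need the lower bound $g_{d,+}(x)\ge c\,x_1(1+|x|)^{-d}$ on all of $\Z^d_+$, not just asymptotically.

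For $d\ge3$ I would take $y=e_1$ in \eqref{eq:half-green-global}. Then $|x-e_1|\asymp 1+|x|$ up to constants on $\Z^d_+$, so
\[
\Phi_{d,+}(x,e_1)\asymp (1+|x|)^{2-d}\Bigl(1\wedge \tfrac{x_1}{(1+|x|)^2}\Bigr).
\]
Because $x_1\le 1+|x|$, we have $x_1/(1+|x|)^2\le 1$, so the minimum is this ratio. This gives $\Phi_{d,+}(x,e_1)\asymp x_1(1+|x|)^{-d}$.

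For $d=2$ the same conclusion is exactly \eqref{eq:two-pole-appendix}. Dividing by the positive constant $\Phi_{d,+}(e_1,e_1)$ yields $x_1(1+|x|)^{-d}\le C g_{d,+}(x)$ globally.

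The main point to check is that $1+|x-e_1|$ and $1+|x|$ are comparable uniformly on $\Z^d_+$. This holds since $|x-e_1|\le |x|+1$ and $|x|\le |x-e_1|+1$. It is the only place where care is needed; the rest is a direct combination of Lemma \ref{lm 3.1} with the Green kernel bounds.
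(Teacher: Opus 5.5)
Your proposal is correct and follows essentially the same route as the paper: the upper bound \eqref{eq:half-green-upper} (valid for all $d\ge2$), the bound $y_1\le 1+|y|$, and Lemma~\ref{lm 3.1} with $a=d$ and exponent $s-1>d$. You also derive the global lower bound $g_{d,+}(x)\ge c\,x_1(1+|x|)^{-d}$ from \eqref{eq:half-green-global} for $d\ge3$ and from \eqref{eq:two-pole-appendix} for $d=2$, which makes explicit a step the paper only asserts (it states the comparability only as $|x|\to\infty$); this step is correct once the slip ``$|x-e_1|\asymp 1+|x|$'' is read as $1+|x-e_1|\asymp 1+|x|$, as you yourself state at the end.
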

\begin{proof}
It follows from \eqref{eq:weighted-convolution}, \eqref{eq:half-green-upper} and
$y_1\leq1+|y|$, for $s>d+1$, we have
\begin{equation}\label{eq:half-weighted-green}
 \begin{aligned}
 \sum_{y\in\Z^d_+}\Phi_{d,+}(x,y)(1+|y|)^{-s}
 &\leq Cx_1\sum_{y\in\Z^d}
 (1+|x-y|)^{-d}(1+|y|)^{-(s-1)}\\
 &\leq Cx_1(1+|x|)^{-d}\leq Cg_{d,+}(x).
 \end{aligned}
\end{equation}
We complete the proof. \end{proof}

\begin{proof}[Proof of Proposition \ref{lem:half-eta-q}]
Fix $q_{1}>\frac{d+1+\beta}{d-1}$, then we have $\eta_{q_1,+}<\infty$. For $y\neq e_1$ we have
\[
 g_{d,+}(y)^q
 =g_{d,+}(y)^{q_{1}}g_{d,+}(y)^{q-q_{1}}
 \leq\rho_{d,+}^{q-q_{1}}g_{d,+}(y)^{q_{1}}.
\]
So we have
\[
 0\leq
 \Phi_{d,+}*\left(Q_\beta g_{d,+}^q\mathbf 1_{\Z_+^d\setminus\{e_1\}}\right)
 \leq
 \rho_{d,+}^{\,q-q_1}\Phi_{d,+}*\left(Q_\beta g_{d,+}^{q_1}\mathbf 1_{\Z_+^d\setminus\{e_1\}}\right)
\]
and
\begin{equation}\label{eq:half-eta-decay}
\eta_{q,+}\leq\eta_{q_1,+}\rho_{d,+}^{q-q_1}.
\end{equation}
Moreover, by the same method in Proposition \ref{lem:eta-q} and inequality in Lemma \ref{lem:half-weighted-green},  we have $c_q\geq\frac{1}{eq}$ and
\[
\frac{\eta_{q,+}}{c_q}
\leq
e q\,\eta_{q_1,+}\rho_{d,+}^{q-q_1}
\rightarrow0\qquad\text{as }q\to\infty.
\]
So there exists $q^{\sharp}>q_{1}$ such that
\begin{equation}\label{eq:half-testing}
\eta_{q,+}\leq c_q\qquad\text{for every }q\ge q^{\sharp}.
\end{equation}
We complete the proof. \end{proof}

\section{Positive solutions for H\'enon equations }

\subsection{Existence in $\Z^d$ with $d\geq 3$ }

\begin{lemma}\label{lem:compactness-Rq}
For \(q\ge q^*\), denote
\[
  \mathcal R_q(w)
  = g_d * \left(Q_\beta |w|^q \mathbf 1_{\mathbb Z^d\setminus\{0\}}\right)
  \qquad\text{for } w\in \mathbb X_{g_d}.
\]
Then \(\mathcal R_q\) is continuous on \(\mathbb X_{g_d}\) and maps every bounded subset of \(\mathbb X_{g_d}\) into a relatively compact subset.
\end{lemma}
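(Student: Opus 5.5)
The plan is to prove continuity and compactness separately. Both rest on one pointwise domination: for $w\in\mathbb X_{g_d}$ with $\|w\|_{g_d}\le M$ we have $|w|\le Mg_d$. Combined with $(\cQ_0)$, this gives
\[
Q_\beta|w|^q\mathbf 1_{\Z^d\setminus\{0\}}\le M^q Q_\beta g_d^q\mathbf 1_{\Z^d\setminus\{0\}}\le CM^q(1+|y|)^{\beta-(d-2)q}.
\]
Since $q\ge q^*>\frac{d+\beta}{d-2}$, the exponent $s:=(d-2)q-\beta$ exceeds $d$. Corollary \ref{cor:weighted-green} then yields
\[
|\mathcal R_q(w)(x)|\le CM^q(1+|x|)^{2-d}\le C'M^q g_d(x),
\]
using $g_d=\Phi_d/\Phi_d(0)$ and \eqref{eq:whole-green}. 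Equivalently, the bound follows from the definition of $\eta_q$, since $\mathcal R_q(w)\le M^q\,\Phi_d*(Q_\beta g_d^q\mathbf 1)/\Phi_d(0)$. So $\mathcal R_q$ maps bounded sets to bounded sets and is well defined.

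\emph{Continuity.} Let $w_n\to w$ in $\mathbb X_{g_d}$, so all $\|w_n\|_{g_d}\le M$. I will use the elementary inequality $\big||a|^q-|b|^q\big|\le q\max(|a|,|b|)^{q-1}|a-b|$ with $\max\le Mg_d$ and $|w_n-w|\le \|w_n-w\|_{g_d}g_d$. This gives
\[
\big||w_n|^q-|w|^q\big|\le qM^{q-1}\|w_n-w\|_{g_d}\,g_d^q .
\]
Applying the same convolution bound as above,
\[
\|\mathcal R_q(w_n)-\mathcal R_q(w)\|_{g_d}\le C qM^{q-1}\|w_n-w\|_{g_d}\to0 .
\]

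\emph{Compactness.} I expect this step to carry the main difficulty: on an infinite lattice, pointwise convergence does not give convergence in the weighted sup norm, so I need uniform smallness at infinity relative to $g_d$. Let $(w_n)$ be bounded by $M$. Each $\mathcal R_q(w_n)(x)$ is bounded uniformly in $n$ for each fixed $x$, so a diagonal argument over the countable set $\Z^d$ yields a subsequence converging pointwise to some $v$. To upgrade to norm convergence, I will prove the tail estimate
\[
\sup_n\sup_{|x|\ge R}\frac{\mathcal R_q(w_n)(x)}{g_d(x)}\le \epsilon(R)\to0 .
\]
Together with pointwise convergence on the finite ball $B_R$, this gives Cauchy-ness in $\|\cdot\|_{g_d}$.

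For the tail estimate, I will use the room in the exponent. Since $s>d$, Lemma \ref{lm 3.1} can be applied with a slightly different choice of parameters. Pick $a=d-2+\theta\le d$ with small $\theta>0$, and split the source $f=(1+|y|)^{-s}$ into two pieces. The piece $f\mathbf 1_{|y|\le R/2}$ has finite support, so its convolution with $\Phi_d$ is at most $C_R(1+|x|)^{2-d}$. It is only a bounded multiple of $g_d$, not a small one, so this piece gives the wrong direction of smallness.

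A cleaner route is to split the source far and near instead. The far source $f\mathbf 1_{|y|>R'}$ satisfies $f\mathbf 1_{|y|>R'}\le R'^{-\theta}(1+|y|)^{-(s-\theta)}$ with $s-\theta>d$. Hence its potential is at most $CR'^{-\theta}g_d$ uniformly in $x$, which makes it uniformly small. The near source is a finite sum of translates of $\Phi_d$, with coefficients bounded by $M^q$. Since $\Phi_d(x-y)/\Phi_d(x)\to1$ uniformly over $|y|\le R'$, the near part equals $\Phi_d(x)\sum_{|y|\le R'}Q_\beta|w_n|^q(y)+o(1)\Phi_d(x)$ as $|x|\to\infty$. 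Under the subsequence, its coefficients converge by pointwise convergence, so it is Cauchy in norm as well. Combining the two parts gives a Cauchy subsequence in $\mathbb X_{g_d}$, which proves relative compactness.
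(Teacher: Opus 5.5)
Your final argument is correct and is essentially the paper's proof. Continuity comes from the same mean-value bound as \eqref{eq:Rq-Lipschitz}. Compactness comes from splitting the source at $|y|=R'$: the far part is uniformly small in $\|\cdot\|_{g_d}$ by Corollary~\ref{cor:weighted-green} applied with the spare exponent $s-\theta>d$ (the paper's $s'$), and the near part lies in the finite-dimensional span of $\Phi_d(\cdot-y)$, $0<|y|\le R'$. The paper phrases this as total boundedness via the finite-rank maps $\mathcal R_{q,R}$, and you phrase it as a Cauchy diagonal subsequence; the two are equivalent.

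Three corrections to the write-up. First, discard the announced estimate $\sup_n\sup_{|x|\ge R}\mathcal R_q(w_n)(x)/g_d(x)\to0$, because it is false: by \eqref{eq:whole-green} the ratio is at least $c(1+|y_0|)^{2-d}Q_\beta(y_0)|w_n(y_0)|^q$ for every $x$ and every $y_0\neq0$. Second, take the diagonal subsequence for $w_n$ itself, so that the coefficients $Q_\beta(y)|w_n(y)|^q$ visibly converge. Third, the near part only needs $\|\Phi_d(\cdot-y)\|_{g_d}\le C(1+|y|)^{d-2}$, which follows from \eqref{eq:whole-green}; you do not need the finer asymptotics $\Phi_d(x-y)/\Phi_d(x)\to1$.
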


\begin{proof}
Recall that
\[
  g_d=\frac{\Phi_d}{\Phi_d(0)},
\]
so for any function \(f\) on \(\mathbb Z^d\),
\begin{equation}\label{eq:g-Phi-convolution}
  g_d * f
  =
  \frac{1}{\Phi_d(0)}\,\Phi_d * f.
\end{equation}
Put
\[
  s=q(d-2)-\beta>d.
\]
The inequality \(s>d\) is equivalent to
\(q>\frac{d+\beta}{d-2}\).

If \(\|w\|_{g_d}\le M\), then \(|w(y)|\le M g_d(y)\), and hence,
using \(g_d(y)\asymp(1+|y|)^{2-d}\),
\begin{equation}\label{eq:source-bound}
  Q_\beta(y)|w(y)|^q
  \le
  C_{M} (1+|y|)^{\beta+q(2-d)}
  =
  C_{M} (1+|y|)^{-s}.
\end{equation}

Choose \(s'\) with \(d<s'<s\). For \(|y|>R\),
\[
  (1+|y|)^{-s}
  \le
  (1+R)^{-(s-s')}(1+|y|)^{-s'}.
\]
Applying Corollary  \ref{cor:weighted-green} with \(s'>d\), we obtain
\begin{equation}\label{eq:uniform-tail}
  \sup_{\|w\|_{g_d}\le M}
  \left\|
  g_d * \left(Q_\beta |w|^q \mathbf 1_{\{|y|>R\}}\right)
  \right\|_{g_d}
  \le
  C_{M} (1+R)^{-(s-s')}
  \rightarrow0
  \qquad {\rm as}\ R\to\infty.
\end{equation}

For each fixed \(y\), by \eqref{eq:whole-green} and
\(1+|x|\le(1+|x-y|)(1+|y|)\),
\[
  \frac{\Phi_d(x-y)}{g_d(x)}
  \le
  C\Big(\frac{1+|x|}{1+|x-y|}\Big)^{d-2}
  \le
  C(1+|y|)^{d-2}.
\]
Therefore
\begin{equation}\label{eq:Phi-in-Xg}
  \|\Phi_d(\cdot-y)\|_{g_d}
  \le
  C(1+|y|)^{d-2}
  <\infty,
\end{equation}
so \(\Phi_d(\cdot-y)\in\mathbb X_{g_d}\).

For \(w\in\mathbb X_{g_d}\), define the operator
\[
  \mathcal R_{q,R}(w)(x)
  =
  \frac{1}{\Phi_d(0)}
  \sum_{0<|y|\le R}
  \Phi_d(x-y)Q_\beta(y)|w(y)|^q.
\]

\medskip
\noindent\textit{We next show that \(\mathcal R_{q,R}\) is continuous.}
Suppose \(w_n\to w\) in \(\mathbb X_{g_d}\). Then for every fixed \(y\)
with \(0<|y|\le R\),
\[
  |w_n(y)-w(y)|
  \le
  g_d(y)\|w_n-w\|_{g_d}
  \rightarrow0.
\]
Since \(\{w_n\}\) is bounded in \(\mathbb X_{g_d}\), there exists \(M>0\)
such that
\[
  \|w_n\|_{g_d}\le M
  \ {\rm and}\
  \|w\|_{g_d}\le M.
\]
By the mean value theorem applied to \(t\mapsto t^q\) on \([0,\infty)\),
\[
  \bigl||w_n(y)|^q-|w(y)|^q\bigr|
  \le
  qM^{q-1}g_d(y)^q\|w_n-w\|_{g_d}.
\]
Since \(\|\Phi_d(\cdot-y)\|_{g_d}<\infty\) by \eqref{eq:Phi-in-Xg}
and \(\{y:0<|y|\le R\}\) is finite,
\[
\begin{aligned}
  \|\mathcal R_{q,R}(w_n)-\mathcal R_{q,R}(w)\|_{g_d}
  &\le
  \frac{1}{\Phi_d(0)}
  \sum_{0<|y|\le R}
  Q_\beta(y)\|\Phi_d(\cdot-y)\|_{g_d}
  \bigl||w_n(y)|^q-|w(y)|^q\bigr|\\
  &\le
  \frac{qM^{q-1}}{\Phi_d(0)}
  \Big(
    \sum_{0<|y|\le R}
    Q_\beta(y)g_d(y)^q\|\Phi_d(\cdot-y)\|_{g_d}
  \Big)
  \|w_n-w\|_{g_d}.
\end{aligned}
\]
The sum in parentheses is finite and independent of \(n\), because
\[
  Q_\beta(y)g_d(y)^q\|\Phi_d(\cdot-y)\|_{g_d}
  \le
  C(1+|y|)^{-s}(1+|y|)^{d-2}
  =
  C(1+|y|)^{-s+d-2},
\]
and \(-s+d-2<-2<0\) since \(s>d\). Hence
\[
  \|\mathcal R_{q,R}(w_n)-\mathcal R_{q,R}(w)\|_{g_d}
  \rightarrow0\quad {\rm as}\ n\to\infty.
\]
Thus \(\mathcal R_{q,R}\) is continuous.

\medskip
\noindent\textit{We next prove the compactness of \(\mathcal R_{q}\).}
For every \(w\in\mathbb X_{g_d}\), let
\[
  \mathcal R_{q,R}(w)
  =
  \sum_{0<|y|\le R}
  \frac{Q_\beta(y)|w(y)|^q}{\Phi_d(0)}
  \Phi_d(\cdot-y)
  \in V_R,
\]
where
\[
  V_R
  :=
  \operatorname{span}
  \left\{
    \Phi_d(\cdot-y):
    y\in\mathbb Z^d,\ 0<|y|\le R
  \right\}.
\]
Let $B_M=\{w:\|w\|_{g_d}\le M\}\subset\mathbb X_{g_d}$. Then
\[
  |w(y)|\le M g_d(y)
  \qquad
  \text{for every } w\in B_M,\ y\in\mathbb Z^d,
\]
and therefore
\[
  \left|
    \frac{Q_\beta(y)|w(y)|^q}{\Phi_d(0)}
  \right|
  \le
  \frac{M^q}{\Phi_d(0)}Q_\beta(y)g_d(y)^q\quad \text{for every } w\in B_M,\ y\in\mathbb Z^d.
\]
For fixed \(R\), the set \(\{y:0<|y|\le R\}\) is finite, so
\(\mathcal R_{q,R}(B_M)\) is bounded in the finite-dimensional space
\(V_R\). Since every bounded subset of a finite-dimensional space is
relatively compact, \(\mathcal R_{q,R}(B_M)\) is relatively compact in
\(\mathbb X_{g_d}\). Hence \(\mathcal R_{q,R}\) is a compact operator.

By \eqref{eq:uniform-tail},
\[
  \lim_{R\to\infty}
  \sup_{w\in B_M}
  \|\mathcal R_q(w)-\mathcal R_{q,R}(w)\|_{g_d}
  =
  0.
\]
Let \(\varepsilon>0\). Choose \(R\) sufficiently large so that
\[
  \|\mathcal R_q(w)-\mathcal R_{q,R}(w)\|_{g_d}
  <
  \frac{\varepsilon}{3}
  \qquad
  \text{for every } w\in B_M.
\]
Since \(\mathcal R_{q,R}(B_M)\) is relatively compact, it is totally
bounded. Hence there exist finitely many elements
\(v_1,\dots,v_N\in\mathbb X_{g_d}\) such that, for every \(w\in B_M\),
one can choose \(j\in\{1,\dots,N\}\) for which
\[
  \|\mathcal R_{q,R}(w)-v_j\|_{g_d}
  <
  \frac{\varepsilon}{3}.
\]
For this choice of \(j\),
\[
  \|\mathcal R_q(w)-v_j\|_{g_d}
  \le
  \|\mathcal R_q(w)-\mathcal R_{q,R}(w)\|_{g_d}
  +
  \|\mathcal R_{q,R}(w)-v_j\|_{g_d}
  <
  \frac{2\varepsilon}{3}
  <
  \varepsilon.
\]
Thus the finite set \(\{v_1,\dots,v_N\}\) is an \(\varepsilon\)-net for
\(\mathcal R_q(B_M)\). Therefore \(\mathcal R_q(B_M)\) is totally bounded.
Since \(\mathbb X_{g_d}\) is a Banach space, the closure of
\(\mathcal R_q(B_M)\) is compact. Hence \(\mathcal R_q(B_M)\) is relatively
compact. Since \(M>0\) was arbitrary and every bounded subset of
\(\mathbb X_{g_d}\) is contained in some \(B_M\),
\(\mathcal R_q\) maps bounded sets into relatively compact sets.

\medskip
\noindent\textit{We next prove the continuity of \(\mathcal R_q\).}
If \(\|w\|_{g_d},\|z\|_{g_d}\le M\), then
\[
  \bigl||w(y)|^q-|z(y)|^q\bigr|
  \le
  qM^{q-1}g_d(y)^q\|w-z\|_{g_d},
\]
and hence, using \eqref{eq:g-Phi-convolution} and the definition of
\(\eta_q\),
\begin{equation}\label{eq:Rq-Lipschitz}
  \|\mathcal R_q(w)-\mathcal R_q(z)\|_{g_d}
  \le
  qM^{q-1}\eta_q\|w-z\|_{g_d}.
\end{equation}
Thus \(\mathcal R_q\) is continuous.
\end{proof}

We also recall the following standard form of the Schauder
fixed-point theorem; see \cite{Schauder}.

\begin{lemma}\label{lem:schauder-fixed-point}
Let $\bX$ be a real Banach space and let $A\subset \bX$ be nonempty,
closed, bounded, and convex. Suppose that $\cT:A\to A$ is continuous
and that $\cT(A)$ is relatively compact in $\bX$. Then there exists
$u\in A$ such that $\cT(u)=u$.
\end{lemma}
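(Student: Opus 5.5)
The plan is the classical reduction of Schauder's theorem to Brouwer's fixed point theorem, using finite-dimensional approximations of $\cT$ built from an $\varepsilon$-net of its range. Set $K=\overline{\cT(A)}$. By hypothesis $K$ is compact. Since $\cT(A)\subset A$ and $A$ is closed, we also have $K\subset A$.

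\emph{Step 1: Schauder projection.} Fix $\varepsilon>0$. By compactness, choose an $\varepsilon$-net $v_1,\dots,v_N\in K$ of $K$. Define
\[
\mu_i(x)=\max\{\varepsilon-\|x-v_i\|,0\}
\quad\text{and}\quad
P_\varepsilon(x)=\frac{\sum_{i=1}^N\mu_i(x)v_i}{\sum_{i=1}^N\mu_i(x)},\qquad x\in K.
\]
The denominator is positive on $K$, because every $x\in K$ lies within distance $\varepsilon$ of some $v_i$. Hence $P_\varepsilon$ is continuous on $K$. It takes values in $C_\varepsilon:=\operatorname{conv}\{v_1,\dots,v_N\}$. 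Moreover $C_\varepsilon\subset A$, since $A$ is convex and each $v_i\in A$. Finally, $P_\varepsilon(x)$ is a convex combination of the $v_i$ with $\|x-v_i\|<\varepsilon$, so $\|P_\varepsilon(x)-x\|<\varepsilon$ for all $x\in K$.

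\emph{Step 2: finite-dimensional fixed point.} The map $\cT_\varepsilon:=P_\varepsilon\circ\cT$ is continuous from $C_\varepsilon$ into $C_\varepsilon$. The set $C_\varepsilon$ is a compact convex subset of the finite-dimensional space $\operatorname{span}\{v_1,\dots,v_N\}$. By Brouwer's theorem, there is $x_\varepsilon\in C_\varepsilon$ with $\cT_\varepsilon(x_\varepsilon)=x_\varepsilon$. Step 1 then gives $\|\cT(x_\varepsilon)-x_\varepsilon\|<\varepsilon$.

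\emph{Step 3: passage to the limit.} Take $\varepsilon=1/n$ and write $x_n$ for $x_{1/n}$. Since $\cT(x_n)\in K$ and $K$ is compact, a subsequence satisfies $\cT(x_{n_k})\to u\in K\subset A$. Then $x_{n_k}\to u$ as well, because $\|x_{n_k}-\cT(x_{n_k})\|<1/n_k$. Continuity of $\cT$ on $A$ gives $\cT(x_{n_k})\to\cT(u)$, and uniqueness of limits yields $\cT(u)=u$.

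\emph{Main obstacle.} The delicate point is Step 2: Brouwer's theorem is usually stated for closed balls, whereas $C_\varepsilon$ is a general compact convex set. I would handle this in one of two ways. The first is that a compact convex set with nonempty relative interior in a finite-dimensional space is homeomorphic to a closed ball of its affine hull, via the radial (Minkowski gauge) map. The second is to compose with the nearest-point retraction of a large Euclidean ball onto $C_\varepsilon$, which is continuous because $C_\varepsilon$ is closed and convex in a Euclidean structure on the span. Either route reduces Step 2 to the standard ball version. Boundedness of $A$ is not actually needed in this argument; the only properties used are that $A$ is closed and convex and that $\cT(A)$ is relatively compact.
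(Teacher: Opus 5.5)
Your proof is correct. It also does something the paper does not do: the paper never proves this lemma, but quotes it as the standard Schauder fixed point theorem with a reference to \cite{Schauder}. So there is no in-paper argument to compare against. What you give is the classical proof: a Schauder projection onto the convex hull of a finite $\varepsilon$-net of $K=\overline{\cT(A)}$, Brouwer's theorem on that finite-dimensional compact convex set, and then a compactness limit. Each step checks:
\begin{itemize}
\item $K\subset A$ because $A$ is closed, and $C_\varepsilon\subset A$ by convexity.
\item $\cT(C_\varepsilon)\subset K$, so $P_\varepsilon\circ\cT$ is well defined and continuous on $C_\varepsilon$.
\item $\|P_\varepsilon(y)-y\|<\varepsilon$ for $y\in K$ yields $\|\cT(x_\varepsilon)-x_\varepsilon\|<\varepsilon$.
\end{itemize}
Both of your reductions of Brouwer from balls to general compact convex sets are sound. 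In the first, the qualifier ``nonempty relative interior'' holds automatically for any nonempty convex set in finite dimensions. In the second, a fixed point $z$ of $\cT_\varepsilon\circ r$ lies in $C_\varepsilon$, hence $r(z)=z$ and $z=\cT_\varepsilon(z)$.

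Compared with the bare citation, your argument shows exactly which hypotheses are used: closedness and convexity of $A$, continuity of $\cT$, and relative compactness of $\cT(A)$. Boundedness of $A$ is therefore superfluous, as you note. Completeness of $\bX$ is never used either, since compactness of $K$ is part of the hypothesis, so the lemma holds in any normed space. In the paper's applications this extra generality is harmless, because $\mathcal K_q$ and $\mathcal K_{q,+}$ are bounded. The hypothesis that actually takes work there is the relative compactness of $\cT_q(\mathcal K_q)$, which the paper derives from Lemma~\ref{lem:compactness-Rq}.
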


\medskip

\begin{proof}[Proof of Theorem  \ref{thm:intro-whole}]
Define
\[
  \mathcal K_q
  =
  \left\{
    w\in \mathbb X_{g_d}:
    w(0)=1,\ 0\le w\le \frac{q}{q-1}g_d
  \right\}.
\]
Clearly, \(\mathcal K_q\) is nonempty, closed, bounded, and convex.
Define
\[
  \cT_q[w]
  =
  \frac{g_d*(Q_\beta w^q)}{g_d*(Q_\beta w^q)(0)},
  \qquad
  w\in\mathcal K_q.
\]
By the definition of \(\mathcal R_q\),
\begin{equation}\label{Tq(w)}
  \cT_q[w]
  =
  \frac{g_d+\mathcal R_q(w)}
       {1+\mathcal R_q(w)(0)}.
\end{equation}
Since \(w\le \frac{q}{q-1}g_d\), by Proposition  \ref{lem:eta-q} and the
definitions of \(c_q\) and \(\eta_q\),
\[
  \|\mathcal R_q(w)\|_{g_d}
  \le
  \left(\frac{q}{q-1}\right)^q\eta_q
  \le
  \left(\frac{q}{q-1}\right)^q c_q
  =
  \frac{q}{q-1}-1.
\]
It follows that
\[
  0<\cT_q[w]
  =
  \frac{g_d+\mathcal R_q(w)}{1+\mathcal R_q(w)(0)}
  \le
  g_d+\mathcal R_q(w)
  \le
  \frac{q}{q-1}g_d.
\]
Besides, \(\cT_q[w](0)=1\), so \(\cT_q\) maps \(\mathcal K_q\) into itself.
Moreover, since \(\mathcal R_q\) is continuous and compact on bounded
subsets of \(\mathbb X_{g_d}\) by Lemma  \ref{lem:compactness-Rq}, and
\(\mathcal K_q\) is bounded, \(\cT_q\) is continuous and compact.
By the Schauder fixed point theorem, see Lemma \ref{lem:schauder-fixed-point},   there exists \(w\in\mathcal K_q\)
such that
$$\cT_q[w]=w. $$

At this fixed point,
\[
  0\le\mathcal R_q(w)(0)\le \frac{q}{q-1}-1,
\]
which leads to $\frac1{1+\mathcal R_q(w)(0)}\geq  \frac{q-1}{q}$
and then
\begin{equation}\label{eq:comparison}
  \frac{q-1}{q}g_d
  \le
  w
  \le
  \frac{q}{q-1}g_d.
\end{equation}
Let
\[
  D=g_d*(Q_\beta w^q)(0),
  \qquad
  v=D^{-1/(q-1)}w.
\]
Then
\[
  Q_\beta v^q
  =
  D^{-q/(q-1)}Q_\beta w^q,
\]
and hence
\[
  g_d*(Q_\beta v^q)
  =
  D^{-q/(q-1)}g_d*(Q_\beta w^q)
  =
  D^{-q/(q-1)}Dw
  =
  D^{-1/(q-1)}w
  =
  v.
\]
Thus
\[
  v=g_d*(Q_\beta v^q)
  =
  \frac{1}{\Phi_d(0)}\Phi_d*(Q_\beta v^q).
\]
Applying \(-\Delta\) gives
\[
  -\Delta v
  =
  \frac{1}{\Phi_d(0)}Q_\beta v^q.
\]
To obtain a solution of the original equation, set
\[
 u=\Phi_d(0)^{-1/(q-1)}v.
\]
Then
\[
  -\Delta u
  =
  \Phi_d(0)^{-1/(q-1)}\left(-\Delta v\right)
  =
  \Phi_d(0)^{-1/(q-1)}\frac{1}{\Phi_d(0)}Q_\beta v^q
  =
  Q_\beta u^q.
\]
Moreover,
\[
  u(x)\asymp v(x)\asymp g_d(x)\asymp(1+|x|)^{2-d}.
\]
Finally, \(Q_\beta u^q\in\ell^1(\mathbb Z^d)\), and since
\(\Phi_d\le\Phi_d(0)\), the sum
\[
  \sum_{y\in\mathbb Z^d}
  \left|\Phi_d(x-y)Q_\beta(y)u(y)^q\right|
  \le
  \Phi_d(0)\sum_{y\in\mathbb Z^d}Q_\beta(y)u(y)^q
  <\infty
\]
converges absolutely. Therefore
\[
  -\Delta u(x)
  =
  \sum_{y\in\mathbb Z^d}
  [-\Delta_x\Phi_d(x-y)]Q_\beta(y)u(y)^q
  =
  Q_\beta(x)u(x)^q.
\]
This proves the theorem.
\end{proof}

  \subsection{Existence in $\Z^d_+$ with $d\geq 2$}
\begin{lemma}\label{lem:half-compactness-Rq}
Let $q^\sharp$ be defined in Proposition \ref{lem:half-eta-q}, $q\ge q^\sharp$ and
\[
 \mathcal R_{q,+}(w)=\frac{1}{\Phi_{d,+}(e_1,e_1)}\Phi_{d,+}*\left(Q_\beta|w|^q\mathbf 1_{\Z_{+}^d\setminus\{e_1\}}\right)
 \qquad for\  w\in\bX_{g_{d,+}},
\]
where $e_1=(1,0,\cdots,0)$. Then $\mathcal R_{q,+}$ is continuous and compact on every bounded subset of $\bX_{g_{d,+}}$.
\end{lemma}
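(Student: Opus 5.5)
We follow the proof of Lemma \ref{lem:compactness-Rq} step by step. The whole-space Green function bounds are replaced by the half-space upper bound \eqref{eq:half-green-upper}, which holds for every $d\ge2$. The convolution estimate used is Lemma \ref{lem:half-weighted-green}, in place of Corollary \ref{cor:weighted-green}.

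\emph{Source bound.} Put $s=q(d-1)-\beta-1$. Then $s>d+1$ is equivalent to $q>\frac{d+2+\beta}{d-1}$, which differs from the threshold in Proposition \ref{lem:half-eta-q}. We handle this as follows. If $\|w\|_{g_{d,+}}\le M$, then $|w(y)|\le Mg_{d,+}(y)\le CM\,y_1(1+|y|)^{-d}$. For the kernel we use $\Phi_{d,+}(x,y)\le Cx_1y_1(1+|x-y|)^{-d}$. Together these give
\[
\Phi_{d,+}(x,y)Q_\beta(y)|w(y)|^q\le C_M x_1(1+|x-y|)^{-d}y_1^{1+q}(1+|y|)^{\beta-dq}.
\]
Since $y_1\le 1+|y|$, the $y$-factor is at most $(1+|y|)^{-(dq-\beta-q-1)}$. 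Lemma \ref{lm 3.1} with $a=d$ applies once $dq-\beta-q-1>d$, i.e. $q>\frac{d+1+\beta}{d-1}$, which is exactly the hypothesis. It yields
\[
Q_\beta|w|^q\le C_M\,y_1(1+|y|)^{-\sigma}, \qquad \sigma=dq-\beta-q>d+1\ \text{after absorbing } y_1.
\]
So in the tail estimate below we always keep one factor $y_1$ alongside $x_1$, and apply \eqref{eq:weighted-convolution} with $a=d$ and some exponent $s'\in(d,\,dq-\beta-q-1)$. This is the one place that needs care; I expect it to be the main (mild) obstacle.

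\emph{Uniform tail.} For $|y|>R$ we bound $(1+|y|)^{-(dq-\beta-q-1)}\le(1+R)^{-\epsilon}(1+|y|)^{-s'}$, where $\epsilon=dq-\beta-q-1-s'>0$. This gives
\[
\sup_{\|w\|\le M}\Bigl\|\Phi_{d,+}*\bigl(Q_\beta|w|^q\mathbf 1_{\{|y|>R\}}\bigr)\Bigr\|_{g_{d,+}}\le C_M(1+R)^{-\epsilon}\to0,
\]
using $x_1(1+|x|)^{-d}\le Cg_{d,+}(x)$.

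\emph{Finite truncation.} Define $\mathcal R_{q,R,+}(w)=\frac{1}{\Phi_{d,+}(e_1,e_1)}\sum_{y\in\Z^d_+,\,|y|\le R,\,y\ne e_1}\Phi_{d,+}(\cdot,y)Q_\beta(y)|w(y)|^q$. Each $\Phi_{d,+}(\cdot,y)$ lies in $\bX_{g_{d,+}}$, because
\[
\Phi_{d,+}(x,y)/g_{d,+}(x)\le C y_1\Bigl(\frac{1+|x|}{1+|x-y|}\Bigr)^{d}\le Cy_1(1+|y|)^d .
\]
Hence $\mathcal R_{q,R,+}$ maps bounded sets into bounded subsets of a finite-dimensional subspace, and these are relatively compact. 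Continuity of $\mathcal R_{q,R,+}$ follows from pointwise convergence $w_n(y)\to w(y)$ together with the mean value bound on $t\mapsto t^q$, since only finitely many $y$ enter.

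\emph{Conclusion.} As in the whole-space case, the uniform tail estimate and an $\varepsilon/3$-net argument show that $\mathcal R_{q,+}(B_M)$ is totally bounded, hence relatively compact. Continuity follows directly from the Lipschitz estimate
\[
\|\mathcal R_{q,+}(w)-\mathcal R_{q,+}(z)\|_{g_{d,+}}\le qM^{q-1}\eta_{q,+}\|w-z\|_{g_{d,+}},
\]
which uses $\bigl||w|^q-|z|^q\bigr|\le qM^{q-1}g_{d,+}^q\|w-z\|$ and the finiteness of $\eta_{q,+}$ from Proposition \ref{lem:half-eta-q}.
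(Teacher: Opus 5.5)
Your proposal is correct and follows the paper's proof step for step: source bound, uniform tail estimate, finite-rank truncation using $\Phi_{d,+}(\cdot,y)\in\bX_{g_{d,+}}$, the $\varepsilon/3$-net argument, and the Lipschitz bound via $\eta_{q,+}$; the only difference is that you keep a factor $y_1$ and apply \eqref{eq:weighted-convolution} with $a=d$ directly, which is exactly the content of Lemma \ref{lem:half-weighted-green}. Your worry about the threshold $\frac{d+2+\beta}{d-1}$ is spurious, since the paper bounds $Q_\beta|w|^q\le C_M(1+|y|)^{-s_+}$ with $s_+=q(d-1)-\beta$, and $s_+>d+1$ is precisely the hypothesis $q>\frac{d+1+\beta}{d-1}$, so Lemma \ref{lem:half-weighted-green} applies without adjustment.
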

\begin{proof}
Put
$$s_+=q(d-1)-\beta>d+1.$$
The inequality $s_+>d+1$ is equivalent to $q>\frac{d+1+\beta}{d-1}$.

If $\|w\|_{g_{d,+}}\leq M$, then
$|w(y)|\leq Mg_{d,+}(y)$. Using  $g_{d,+}\asymp \frac{x_1}{(1+|x|)^d}$ and
$x_1\leq1+|x|$, we obtain
\begin{equation}\label{eq:half-source-bound}
 Q_\beta(y)|w(y)|^q\leq C_M(1+|y|)^{\beta+q(1-d)}=C_M(1+|y|)^{-s_+}.
\end{equation}

Choose $s'$ with $d+1<s'<s_+$. For $|y|>R$,
\[
 (1+|y|)^{-s_+}\leq
 (1+R)^{-(s_+-s')}(1+|y|)^{-s'}.
\]
Applying Lemma  \ref{lem:half-weighted-green} with $s'>d+1$, we obtain
\begin{equation}\label{eq:half-uniform-tail}
\begin{aligned}
&\quad \sup_{\|w\|_{g_{d,+}}\le M}
\bigg\|
\frac{1}{\Phi_{d,+}(e_1,e_1)}
\sum_{\substack{y\in\mathbb Z_+^d,\, |y|>R}}
\Phi_{d,+}(\cdot,y)Q_\beta(y)|w(y)|^q
\bigg\|_{g_{d,+}}
\\
& \le
\frac{C_M}{\Phi_{d,+}(e_1,e_1)}
(1+R)^{-(s_+-s')}
\sup_{x\in\mathbb Z_+^d}
\frac{
\displaystyle\sum_{y\in\mathbb Z_+^d}
\Phi_{d,+}(x,y)(1+|y|)^{-s'}
}{
g_{d,+}(x)
}
\\
& \le C_M(1+R)^{-(s_+-s')}
\rightarrow0
\qquad {\rm as}\ R\to\infty.
\end{aligned}
\end{equation}

For each fixed $y\in\Z_{+}^d$, \eqref{eq:half-green-upper} and $1+|x|\leq(1+|x-y|)(1+|y|)$,
\begin{equation}\label{eq:half-kernel-in-X}
 \frac{\Phi_{d,+}(x,y)}{g_{d,+}(x)}
 \leq Cy_1\left(\frac{1+|x|}{1+|x-y|}\right)^d
 \leq Cy_1(1+|y|)^d.
\end{equation}
Thus
\begin{equation}\label{bounded}
\|\Phi_{d,+}(\cdot,y)\|_{g_{d,+}}\le Cy_1(1+|y|)^d<\infty,
\end{equation}
so $\Phi_{d,+}(\cdot,y)\in \mathbb{X}_{g_{d,+}}.$

For $w\in \mathbb{X}_{g_{d,+}}$, let
\[
 \mathcal R_{q,+,R}(w)(x)=
 \frac{1}{\Phi_{d,+}(e_1,e_1)}
 \sum_{\substack{|y|\leq R,\ y\neq e_1}}
 \Phi_{d,+}(x,y)Q_\beta(y)|w(y)|^q.
\]
\medskip
\noindent\textit{We next prove the continuity of $\mathcal R_{q,+,R}$.}
Suppose $w_n\to w$ in $\mathbb{X}_{g_{d,+}}$. Then for each fixed $y$ with $|y|\leq R$ and $y\neq e_1$,
\[
 |w_n(y)-w(y)|\leq g_{d,+}(y)\|w_n-w\|_{g_{d,+}}\rightarrow0, {\rm as}\ n\rightarrow\infty.
\]
Since the sequence $\{w_n\}$ is bounded in $\mathbb{X}_{g_{d,+}}$, there exists $M>0$ such that
$$\|w_n\|_{g_{d,+}}\leq M\ {\rm and}\ \|w\|_{g_{d,+}}\leq M.$$
By the mean value theorem applied to $t\mapsto t^q$ on $[0,\infty)$,
\[
 \bigl||w_n(y)|^q-|w(y)|^q\bigr|
 \leq qM^{q-1}g_{d,+}(y)^q\|w_n-w\|_{g_{d,+}}.
\]
Since $\|\Phi_{d,+}(\cdot,y)\|_{g_{d,+}}<\infty$ by (\ref{bounded}) and $\{y:|y|\leq R,~y\neq e_1\}$ is finite,
\[
\begin{aligned}
&\quad\|\mathcal R_{q,+,R}(w_n)-\mathcal R_{q,+,R}(w)\|_{g_{d,+}}\\
  &\le
  \frac{1}{\Phi_{d,+}(e_1,e_1)}
  \sum_{|y|\le R,~y\neq e_1}
  Q_\beta(y)\|\Phi_{d,+}(\cdot,y)\|_{g_{d,+}}
  \bigl||w_n(y)|^q-|w(y)|^q\bigr|\\
  &\le
  \frac{qM^{q-1}}{\Phi_{d,+}(e_1,e_1)}
  \Big(
    \sum_{|y|\le R,~y\neq e_1}
    Q_\beta(y)g_{d,+}(y)^q\|\Phi_{d,+}(\cdot,y)\|_{g_{d,+}}
  \Big)
  \|w_n-w\|_{g_{d,+}}.
\end{aligned}
\]
The sum in parentheses is finite and independent of \(n\), because
\[
  Q_\beta(y)g_{d,+}(y)^q\|\Phi_{d,+}(\cdot,y)\|_{g_{d,+}}
  \le
  C(1+|y|)^{-s_+}y_1(1+|y|)^{d}
  =
  Cy_1(1+|y|)^{-s_{+}+d},
\]
and \(-s_{+}+d<0\) since \(s_+>d\). Hence
\[
  \|\mathcal R_{q,+,R}(w_n)-\mathcal R_{q,+,R}(w)\|_{g_{d,+}}
  \rightarrow0,
  \qquad {\rm as}\ n\to\infty.
\]
Thus $\mathcal R_{q,+,R}$ is continuous.

\medskip
\noindent\textit{We next show the compactness of $\mathcal R_{q,+}$.}
For every $w\in \mathbb{X}_{g_{d,+}}$,
\[
 \mathcal R_{q,+,R}(w)
 =\sum_{\substack{|y|\leq R,\ y\neq e_1}}
 \frac{Q_\beta(y)|w(y)|^q}{\Phi_{d,+}(e_1,e_1)}
 \Phi_{d,+}(\cdot,y)\in V_{R,+},
\]
where
\[
 V_{R,+}:=\operatorname{span}
 \{\Phi_{d,+}(\cdot,y):y\in\mathbb{Z}_{+}^d,\ |y|\leq R,\ y\neq e_1\}.
\]
Let $B_M=\{w:\|w\|_{g_{d,+}}\leq M\}\subset \mathbb{X}_{g_{d,+}}$ be bounded.
Then
\[
 \left|\frac{Q_\beta(y)|w(y)|^q}{\Phi_{d,+}(e_1,e_1)}\right|
 \leq\frac{M^q}{\Phi_{d,+}(e_1,e_1)}Q_\beta(y)g_{d,+}(y)^q.
\]
For fixed $R$, the set $\{y:y\in\mathbb{Z}_{+}^d,\ |y|\leq R,\ y\neq e_1\}$ is finite, so $\mathcal R_{q,+,R}(B_M)$ is bounded
in the finite-dimensional space $V_{R,+}$ and relatively compact in
$\mathbb{X}_{g_{d,+}}$. Hence $\mathcal R_{q,+,R}$ is compact.\\
By \eqref{eq:half-uniform-tail},
\[
 \lim_{R\to\infty}\sup_{w\in B_M}
 \|\mathcal R_{q,+}(w)-\mathcal R_{q,+,R}(w)\|_{g_{d,+}}=0.
\]
Let $\varepsilon>0$. Choose $R$ sufficiently large so that
$$
\|\mathcal R_{q,+}(w)-\mathcal R_{q,+,R}(w)\|_{g_{d,+}}\le\frac{\varepsilon}{3}\qquad
  \text{for every } w\in B_M.
$$
Since $\mathcal R_{q,+,R}(B_M)$ is relatively compact, there exist finitely many elements
$\overline{v}_1,\ldots,\overline{v}_N\in \mathbb{X}_{g_{d,+}}$ such that, for each $w\in B_M$, some $j$ satisfies
\[
 \|\mathcal R_{q,+,R}(w)-\overline{v}_j\|_{g_{d,+}}<\frac{\varepsilon}{3}.
\]
For this choice of $j$,
\[
\begin{aligned}
 \|\mathcal R_{q,+}(w)-\overline{v}_j\|_{g_{d,+}}
 &\leq\|\mathcal R_{q,+}(w)-\mathcal R_{q,+,R}(w)\|_{g_{d,+}}
\\[1mm]
 &\quad\  +\|\mathcal R_{q,+,R}(w)-\overline{v}_j\|_{g_{d,+}}<\frac{2\varepsilon}{3}<\varepsilon.
\end{aligned}
\]
Thus $\mathcal R_{q,+}(B_M)$ is totally bounded. Since \(\mathbb X_{g_{d,+}}\) is a Banach space, the closure of
\(\mathcal R_{q,+}(B_M)\) is compact.
Since \(M>0\) was arbitrary and every bounded subset of
\(\mathbb X_{g_{d,+}}\) is contained in some \(B_M\),
\(\mathcal R_{q,+}\) maps bounded sets into relatively compact sets.

\medskip
\noindent\textit{We prove the continuity of $\mathcal R_{q,+}$.}
If $\|w\|_{g_{d,+}},\|z\|_{g_{d,+}}\leq M$, the mean value theorem gives
\[
 \bigl||w(y)|^q-|z(y)|^q\bigr|
 \leq qM^{q-1}g_{d,+}(y)^q\|w-z\|_{g_{d,+}}.
\]
By (\ref{eq:half-uniform-tail}) and the definition of $\eta_{q,+}$,
\begin{equation}\label{eq:half-remainder-continuity}
 \|\mathcal R_{q,+}(w)-\mathcal R_{q,+}(z)\|_{g_{d,+}}
 \leq qM^{q-1}\eta_{q,+}\|w-z\|_{g_{d,+}}.
\end{equation}
Thus $\mathcal R_{q,+}$ is continuous.
\end{proof}

Next, we prove the Theorem  \ref{thm:intro-half-equality} by the Lemma \ref{lem:schauder-fixed-point}.

\begin{proof}[Proof of Theorem  \ref{thm:intro-half-equality}]
Define
\[
 \mathcal K_{q,+}=\left\{w\in \mathbb{X}_{g_{d,+}}:w(e_1)=1,\quad
 0\leq w\leq\frac{q}{q-1}g_{d,+}\right\}.
\]
Clearly, $\mathcal K_{q,+}$ is nonempty, closed, bounded, and convex.
Define
\[
 \cT_{q,+}[w]=
 \frac{  \Phi_{d,+}*(Q_\beta w^q)}
 {( \Phi_{d,+}*(Q_\beta w^q))(e_1)}
 \quad{\rm for}\ \,  w\in\mathcal K_{q,+}.
\]
By the definition of $\mathcal R_{q,+}$ and $w(e_1)=1$,
\begin{equation}\label{eq:half-normalized-map}
 \cT_{q,+}[w]=
 \frac{ g_{d,+}+\mathcal R_{q,+}(w)}
 {1+\mathcal R_{q,+}(w)(e_1)}.
\end{equation}
Since $w\leq\frac{q}{q-1}g_{d,+}$, by Proposition  \ref{lem:half-eta-q} and the definition of $\eta_{q,+}$ and $c_q$,
\[
 \|\mathcal R_{q,+}(w)\|_{g_{d,+}}
 \leq\left(\frac{q}{q-1}\right)^q\eta_{q,+}
 \leq\left(\frac{q}{q-1}\right)^qc_q
 =\frac{q}{q-1}-1.
\]
It follows that
\[
 0<\cT_{q,+}[w]
 =\frac{g_{d,+}+\mathcal R_{q,+}(w)}{1+\mathcal R_{q,+}(w)(e_1)}
 \leq g_{d,+}+\mathcal R_{q,+}(w)
 \leq\frac{q}{q-1}g_{d,+}.
\]
Besides, $\cT_{q,+}[w](e_1)=1$, so $\cT_{q,+}$ maps $\mathcal K_{q,+}$
into itself. Moreover, $\mathcal R_{q,+}$ is continuous and compact
on bounded subsets of $\mathbb{X}_{g_{d,+}}$ by Lemma  \ref{lem:half-compactness-Rq},
and $\mathcal K_{q,+}$ is bounded, $\cT_{q,+}$ is continuous and compact. By the
Schauder fixed point theorem, there exists $\overline{w}\in\mathcal K_{q,+}$ such that
$$\cT_{q,+}[\overline{w}]=\overline{w}.$$
At this fixed point,
\[
 0\leq\mathcal R_{q,+}(\overline{w})(e_1)\leq\frac{q}{q-1}-1,
\]
which leads to $\frac{1}{1+\mathcal R_{q,+}(\overline{w})(e_1)}\ge\frac{q-1}{q}$ and then
\begin{equation}\label{eq:half-comparison}
 \frac{q-1}{q}g_{d,+}\leq \overline{w}\leq\frac{q}{q-1}g_{d,+}.
\end{equation}
Let
\[
 D_+=\frac{\Phi_{d,+}*(Q_\beta \overline{w}^q)(e_{1})}{\Phi_{d,+}(e_1,e_1)},
 \qquad \overline{v}=D_+^{-1/(q-1)}\overline{w}.
\]
Then
$$Q_\beta \overline{v}^q=D_+^{-q/(q-1)}Q_\beta \overline{w}^q,$$
and
\begin{align*}
 \frac{1}{\Phi_{d,+}(e_1,e_1)}\Phi_{d,+}*(Q_\beta \overline{v}^q)
 &=\frac{1}{\Phi_{d,+}(e_1,e_1)}D_+^{-q/(q-1)}\Phi_{d,+}*(Q_\beta \overline{w}^q)
\\[1mm]& =D_+^{-1/(q-1)}\overline{w}
\\[1mm]&=\overline{v}\quad {\rm in}\ \mathbb Z_+^d.
\end{align*}
That is,
$$
\overline{v}=\frac{1}{\Phi_{d,+}(e_1,e_1)}\Phi_{d,+}*(Q_\beta \overline{v}^q) \quad {\rm in}\ \mathbb Z_+^d,
$$
which implies that
\[
 -\Delta \overline{v}=\frac{1}{\Phi_{d,+}(e_1,e_1)}Q_\beta \overline{v}^q,\qquad \overline{v}=0\quad {\rm in}\ \Z^d\setminus \Z^d_+.
\]
To obtain a solution of the original equation, set
$$\overline{u}=\Phi_{d,+}(e_1,e_1)^{-1/(q-1)}\overline{v}\quad {\rm in}\ \mathbb Z_+^d.$$
Then
\[
-\Delta \overline{u}=Q_\beta \overline{u}^q\quad {\rm in}\ \Z^d_+,\qquad \overline{u}=0\quad {\rm in}\ \Z^d\setminus \Z^d_+.
\]
Moreover
\[
 \overline{u}(x)\asymp g_{d,+}(x)\asymp\frac{x_1}{(1+|x|)^d}.
\]
By \eqref{eq:half-comparison} and the scaling defined above,
\[
0<\overline{u}(y)
\leq Cg_{d,+}(y)
\leq C\frac{y_1}{(1+|y|)^d}
\leq C(1+|y|)^{1-d}.
\]
Here we used $y_1\leq 1+|y|$. Moreover, assumption
$(\mathcal Q_0)$ with $x_0=e_1$ gives
\[
0\leq Q_\beta(y)
\leq (1+|y-e_1|)^\beta
\leq C(1+|y|)^\beta.
\]
Consequently,
\[
\begin{aligned}
0\leq Q_\beta(y)\overline{u}(y)^q\leq C(1+|y|)^{\beta+q(1-d)}
=C(1+|y|)^{-s_+},
\end{aligned}
\]
where
\[
s_+=q(d-1)-\beta>d+1.
\]
In particular, $Q_\beta(y)\overline{u}(y)^q\in\ell^1(\mathbb Z_+^d)$. By Lemma  \ref{lem:half-weighted-green}, for every fixed
$z\in\mathbb Z_+^d$,
\[
\begin{aligned}
\sum_{y\in\mathbb Z_+^d}
\left|\Phi_{d,+}(z,y)Q_\beta(y)\overline{u}(y)^q\right|
&\leq C\sum_{y\in\mathbb Z_+^d}
\Phi_{d,+}(z,y)(1+|y|)^{-s_+}\leq Cg_{d,+}(z)<\infty.
\end{aligned}
\]
Therefore
\[
 -\Delta\overline{u}(x)
 =\sum_{y\in\Z_{+}^d}[-\Delta_{x}\Phi_{d,+}(x,y)]
 Q_\beta(y)\overline{u}(y)^q
 =Q_\beta(x)\overline{u}(x)^q.
\]
This proves the theorem.
\end{proof}

\appendix

\section{The two-dimensional Dirichlet Green kernel}
\label{app:two-dimensional-kernel}

This appendix constructs the two-dimensional Dirichlet Green
kernel used in  $\Z^2_+$ under the zero Dirichlet boundary condition.
Since the simple random walk on $\mathbb Z^2$ is recurrent, the
construction is based on the two-dimensional potential kernel
rather than on a positive whole-space Green function.

\begin{lemma}\label{lem:two-potential-kernel}
For $(m,n)\in\mathbb Z^2$, define
\begin{equation}\label{eq:two-potential-definition}
 A(m,n)=\frac{1}{4\pi^2}
 \int_{-\pi}^{\pi}\int_{-\pi}^{\pi}
 \frac{1-\cos(m\theta+n\varphi)}
 {4-2\cos\theta-2\cos\varphi}
 \,d\theta\,d\varphi.
\end{equation}
Then $A(m,n)$ is finite,
\[
 A(-m,n)=A(m,n)=A(m,-n),
 \qquad A(m,n)=A(n,m),
\]
and for the unnormalized lattice Laplacian,
\begin{equation}\label{eq:two-potential-equation}
 \Delta A=\delta_0\qquad\text{on }\mathbb Z^2.
\end{equation}
\end{lemma}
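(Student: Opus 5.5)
We work directly from the Fourier integral \eqref{eq:two-potential-definition}, writing
\[
D(\theta,\varphi)=4-2\cos\theta-2\cos\varphi=4\sin^2\tfrac{\theta}{2}+4\sin^2\tfrac{\varphi}{2}
\]
for the symbol of $-\Delta$.

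\emph{Finiteness.} The denominator vanishes on $[-\pi,\pi]^2$ only at $(0,0)$. Near the origin $D\asymp\theta^2+\varphi^2$, because $\sin^2(t/2)\geq t^2/\pi^2$ for $|t|\le\pi$. For the numerator we use
\[
0\le 1-\cos(m\theta+n\varphi)\le \tfrac12(m\theta+n\varphi)^2\le \tfrac12(m^2+n^2)(\theta^2+\varphi^2).
\]
Hence the integrand is nonnegative and bounded by $C(m^2+n^2)$, so $A(m,n)$ is finite. Away from the origin the integrand is continuous, so it is bounded there as well.

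\emph{Symmetries.} These follow from changes of variables in the integral.
\begin{itemize}
\item Replacing $\theta$ by $-\theta$ leaves $D$ invariant and turns $m\theta+n\varphi$ into $-m\theta+n\varphi$. This gives $A(-m,n)=A(m,n)$, and likewise $A(m,-n)=A(m,n)$ by replacing $\varphi$ by $-\varphi$.
\item Swapping $\theta$ and $\varphi$ leaves $D$ invariant and turns $m\theta+n\varphi$ into $n\theta+m\varphi$. This gives $A(m,n)=A(n,m)$.
\end{itemize}

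\emph{The equation $\Delta A=\delta_0$.} Write $x=(m,n)$ and $\xi=(\theta,\varphi)$. Then
\[
A(x)=\frac{1}{4\pi^2}\int_{[-\pi,\pi]^2}\frac{1-\cos(x\cdot\xi)}{D(\xi)}\,d\xi .
\]
Since $\Delta$ is a finite sum, we may apply it under the integral. The constant term $1$ is annihilated by $\Delta$. For the cosine term, using $\cos(a+b)+\cos(a-b)=2\cos a\cos b$,
\[
\sum_{y\sim x}\cos(y\cdot\xi)=\cos(x\cdot\xi)\,(2\cos\theta+2\cos\varphi),
\]
so that
\[
\Delta_x\bigl[-\cos(x\cdot\xi)\bigr]=\cos(x\cdot\xi)\,\bigl(4-2\cos\theta-2\cos\varphi\bigr)=\cos(x\cdot\xi)\,D(\xi).
\]
Therefore
\[
\Delta A(x)=\frac{1}{4\pi^2}\int_{[-\pi,\pi]^2}\cos(x\cdot\xi)\,d\xi .
\]
This equals $1$ if $x=0$ and $0$ otherwise, since $\int_{-\pi}^{\pi}\cos(k t)\,dt=0$ for integers $k\ne0$ and the two-dimensional integral factors after expanding $\cos(m\theta+n\varphi)$.

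\emph{Justification of interchanging $\Delta$ and the integral.} This is the only point needing care, and it is not a real obstacle. Each $A(y)$ for $y\sim x$ is a convergent integral with a nonnegative integrand, so linearity of the integral over the finitely many terms is legitimate. The combined integrand $\cos(x\cdot\xi)$ obtained after cancellation of $D$ is bounded, so no singularity remains at the origin.
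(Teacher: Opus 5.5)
Your proposal is correct and follows essentially the same route as the paper: the same bound $D\ge\frac{4}{\pi^2}(\theta^2+\varphi^2)$ for finiteness, symmetries by changes of variables, and $\Delta$ applied under the integral so that $D$ cancels and leaves $\int\cos(x\cdot\xi)\,d\xi$. The only difference is cosmetic: you get the reflection symmetries by substituting $\theta\mapsto-\theta$ (resp.\ $\varphi\mapsto-\varphi$) directly in the original integral. The paper instead first discards the odd term $\sin(m\theta)\sin(n\varphi)/\lambda$ and reads the symmetries off the numerator $1-\cos(m\theta)\cos(n\varphi)$, so your version is slightly shorter and needs no separate integrability check for that odd term.
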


\begin{proof}
Put
\[
 \lambda(\theta,\varphi)
 =4-2\cos\theta-2\cos\varphi.
\]
We first compare it with $\theta^2+\varphi^2$.
For $0\leq s\le\frac{\pi}{2}$, define $H(s)=s\cos s-\sin s$. Then
\[
 H(0)=0,\qquad H'(s)=-s\sin s\leq 0.
\]
Hence $H(s)\leq0$ and for $0\leq s\le\frac{\pi}{2}$,
\[
 \left(\frac{\sin s}{s}\right)'=\frac{s\cos s-\sin s}{s^2}\leq0.
\]
Therefore
\[
 \frac{\sin s}{s}\geq \frac{\sin(\frac{\pi}{2})}{\frac{\pi}{2}}=\frac{2}{\pi}.
\]
Besides, using
\[
1-\cos t=2\sin^2\left(\frac{t}{2}\right),
\qquad
|\sin s|\leq |s|,
\]
together with the preceding lower bound for $\frac{\sin s}{s}$, we obtain
\begin{equation}\label{eq:(2.2)}
\begin{aligned}
0\leq 1-\cos t&\leq\frac{t^2}{2},
&& t\in\mathbb R,\\
1-\cos t&\geq\frac{2t^2}{\pi^2},
&& |t|\leq\pi.
\end{aligned}
\end{equation}

Now write
\[
 \lambda(\theta,\varphi)=2(1-\cos\theta)+2(1-\cos\varphi).
\]
Applying \eqref{eq:(2.2)} to each term, we have
\begin{equation}\label{eq: estimimate}
 \frac{4}{\pi^2}(\theta^2+\varphi^2)
 \leq\lambda(\theta,\varphi)\leq\theta^2+\varphi^2,
\end{equation}
and
\[
 1-\cos(m\theta+n\varphi)\leq\frac12(m\theta+n\varphi)^2.
\]
Moreover,
\[
 (m^2+n^2)(\theta^2+\varphi^2)-(m\theta+n\varphi)^2=(m\varphi-n\theta)^2\geq0,
\]
so we have
\[
(m\theta+n\varphi)^2\le(m^2+n^2)(\theta^2+\varphi^2).
\]
Combining these two inequalities with \eqref{eq: estimimate}, we get
\[
 0\leq\frac{1-\cos(m\theta+n\varphi)}{\lambda(\theta,\varphi)}
 \leq\frac{\pi^2}{8}(m^2+n^2).
\]
For fixed $m,n$, the right-hand side is a finite constant.

Expand the numerator we have
\[
 1-\cos(m\theta+n\varphi)
 =1-\cos(m\theta)\cos(n\varphi)+\sin(m\theta)\sin(n\varphi).
\]
The last term divided by $\lambda$ is absolutely integrable, because
\[
 \frac{|\sin(m\theta)\sin(n\varphi)|}{\lambda(\theta,\varphi)}
 \leq\frac{\pi^2|mn|}{4}\frac{|\theta\varphi|}{\theta^2+\varphi^2}
 \leq\frac{\pi^2|mn|}{8}.
\]
Here we used $2|\theta\varphi|\leq\theta^2+\varphi^2$. Note that $\sin(m\theta)$ and $\sin(n\varphi)$ is odd. Consequently, the integral of the last term is zero, and
\begin{equation}
 A(m,n)=\frac{1}{4\pi^2}\int_{-\pi}^{\pi}\int_{-\pi}^{\pi}
 \frac{1-\cos(m\theta)\cos(n\varphi)}{\lambda(\theta,\varphi)}\dd\theta\dd\varphi.
\end{equation}
this formula gives
\begin{equation}
 A(-m,n)=A(m,n)=A(m,-n),\qquad A(0,0)=0.
\end{equation}
Interchanging the two integration variables also gives $A(m,n)=A(n,m)$.

For fixed $\theta,\varphi$, put $\psi=m\theta+n\varphi$. The addition formula for cosine gives
\begin{align*}
 \cos(\psi+\theta)+\cos(\psi-\theta)&=2\cos\theta\cos\psi,\\
 \cos(\psi+\varphi)+\cos(\psi-\varphi)&=2\cos\varphi\cos\psi.
\end{align*}
It follows from the definition of $\Delta$ that
\[
 \Delta_{m,n}\cos\psi=(2\cos\theta+2\cos\varphi-4)\cos\psi
 =-\lambda(\theta,\varphi)\cos\psi.
\]
Since the Laplacian of a constant is zero, we have
\[
 \Delta_{m,n}(1-\cos\psi)=\lambda(\theta,\varphi)\cos\psi.
\]
The operator $\Delta$ is a finite linear combination of five function values. Each corresponding integral
in \eqref{eq:two-potential-definition} is absolutely convergent. We may therefore move this finite combination inside the
integral. The denominator is independent of $m,n$ and cancels:
\begin{align}\label{eq: A}
 \Delta A(m,n)
 &=\frac{1}{4\pi^2}\int_{-\pi}^{\pi}\int_{-\pi}^{\pi}
     \cos(m\theta+n\varphi)\dd\theta\dd\varphi\notag\\
 &=\left(\frac{1}{2\pi}\int_{-\pi}^{\pi}\cos(m\theta)\dd\theta\right)
   \left(\frac{1}{2\pi}\int_{-\pi}^{\pi}\cos(n\varphi)\dd\varphi\right).
\end{align}
Since
\[
 \int_{-\pi}^{\pi}\cos(jt)\dd t=
 \begin{cases}
 2\pi,&j=0,\\
 0,&j\neq0.
 \end{cases}
\]
So we have
\begin{equation}
 \Delta A=\delta_0.
\end{equation}
\end{proof}

We next record the estimate that provides positivity after
reflection.

\begin{lemma}\label{lem:two-short-difference}
For $h\in\mathbb{N}$, $h\geq1$ and $k\in\mathbb Z$, let
\[
 P_h(k)=A(h+1,k)-A(h-1,k).
\]
There are constants $c_P,C_P>0$, independent of $h$ and $k$, such
that
\begin{equation}\label{eq:two-short-estimate}
 c_P\frac{h}{h^2+k^2}
 \leq P_h(k)\leq
 C_P\frac{h}{h^2+k^2}.
\end{equation}
In particular, $P_h(k)>0$.
\end{lemma}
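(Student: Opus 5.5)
We work directly from the Fourier representation \eqref{eq:two-potential-definition}. Since $\cos((h\pm1)\theta+k\varphi)$ differ by a shift, the constant terms cancel and
\[
 P_h(k)=\frac{1}{4\pi^2}\int_{-\pi}^{\pi}\int_{-\pi}^{\pi}
 \frac{\cos((h-1)\theta+k\varphi)-\cos((h+1)\theta+k\varphi)}{\lambda(\theta,\varphi)}\,d\theta\,d\varphi
 =\frac{1}{2\pi^2}\int\!\!\int\frac{\sin\theta\,\sin(h\theta+k\varphi)}{\lambda(\theta,\varphi)}\,d\theta\,d\varphi .
\]
The integrand is now bounded near the origin, since $|\sin\theta|/\lambda\lesssim 1/|(\theta,\varphi)|$. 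The plan is to evaluate the $\theta$-integral explicitly by residues, which reduces $P_h(k)$ to a one-dimensional integral with an exponentially decaying kernel.

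\textbf{Step 1 (explicit $\theta$-integral).} For fixed $\varphi$, set $a=2-\cos\varphi\ge1$ and let $t=t(\varphi)\ge0$ solve $\cosh t=a$, so that $\lambda=2(\cosh t-\cos\theta)$. We use the classical identity $\frac{1}{2\pi}\int_{-\pi}^{\pi}\frac{\cos(j\theta)}{\cosh t-\cos\theta}d\theta=\frac{e^{-|j|t}}{\sinh t}$. The odd part in $\theta$ vanishes, and only $\sin\theta\sin(h\theta)\cos(k\varphi)$ survives. Writing $\sin\theta\sin h\theta=\frac12(\cos(h-1)\theta-\cos(h+1)\theta)$, we obtain, for $h\ge1$,
\[
 P_h(k)=\frac{1}{2\pi}\int_{-\pi}^{\pi}\cos(k\varphi)\,\frac{e^{-(h-1)t}-e^{-(h+1)t}}{2\sinh t}\,d\varphi
 =\frac{1}{2\pi}\int_{-\pi}^{\pi}\cos(k\varphi)\,e^{-h\,t(\varphi)}\,d\varphi .
\]
Equivalently, $P_h(k)$ is the Poisson kernel of the half-plane for the walk, which is positive as a hitting distribution. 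Here $t(\varphi)$ is even, analytic, and increasing on $[0,\pi]$, with $t(\varphi)=|\varphi|+O(|\varphi|^3)$ near $0$ and $t\asymp|\varphi|$ on $[-\pi,\pi]$.

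\textbf{Step 2 (the bounds).} The upper bound $P_h(k)\le C/h$ follows from $\int e^{-ch|\varphi|}\,d\varphi\lesssim 1/h$, which settles the case $|k|\le h$. For $|k|>h$, integrate by parts twice in $\varphi$; the boundary terms cancel by $2\pi$-periodicity. This gives
\[
 |P_h(k)|\le k^{-2}\int\bigl|\partial_\varphi^2 e^{-ht}\bigr|\,d\varphi,
\]
with the caveat that $t$ has a corner at $0$. We handle this by treating $e^{-h|\varphi|}$ exactly, since its Fourier transform is explicitly $\asymp h/(h^2+k^2)$, and treating the remainder $e^{-ht}-e^{-h|\varphi|}$, which is $C^1$ with controlled second derivative, by integration by parts. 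This yields $\lesssim h/k^2$.

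For the lower bound we compare with the explicit Poisson-type sum $\frac1{2\pi}\int\cos(k\varphi)e^{-h|\varphi|}d\varphi\asymp \frac{h}{h^2+k^2}$, and show that the error from replacing $t$ by $|\varphi|$ (and from the truncation at $\pi$) is of lower order. This works when $h^2+k^2$ is large, since the error is roughly $O(h/(h^2+k^2)^{3/2})$ plus exponentially small terms. For finitely many $(h,k)$, positivity suffices; it holds by the maximum principle, or because $P_h(k)$ equals a hitting probability times a positive constant.

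\textbf{Main obstacle.} The hardest part is the lower bound in the regime $|k|\gg h$. There the main term is only of size $h/k^2$, and the error from the cubic correction $t-|\varphi|$ must be shown to be $o(h/k^2)$ uniformly. I would first attempt the error bound via an explicit formula for $\partial_\varphi^2(e^{-ht}-e^{-h|\varphi|})$ near $0$. If that proves too delicate, the fallback is a probabilistic argument: interpret $P_h(k)$ as harmonic measure and use a Harnack and gambler's-ruin comparison to get the two-sided bound.
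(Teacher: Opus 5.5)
\textbf{Verdict: there is a genuine gap in the lower bound.} Your Step~1 is the paper's reduction to $P_h(k)=\frac1{2\pi}\int_{-\pi}^{\pi}e^{-h\alpha(\varphi)}\cos(k\varphi)\,d\varphi$ with $\cosh\alpha(\varphi)=2-\cos\varphi$. Your upper bound also works.

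The gap is the lower bound, which is the substance of the lemma. You assert that the error from replacing $e^{-h\alpha}$ by $e^{-h|\varphi|}$ becomes lower order once $h^2+k^2$ is large. As formulated, this fails when $h$ is bounded and $|k|\to\infty$. The endpoint contribution $R'(\pi)/(\pi k^2)=he^{-\pi h}/(\pi k^2)$, which you call exponentially small, is small in $h$ but not in $k$. For instance, with $h=1$ one has $P_1(k)=\frac{1+o(1)}{\pi k^2}$, while
$\frac1\pi\int_0^\pi e^{-\varphi}\cos(k\varphi)\,d\varphi=\frac{1-(-1)^ke^{-\pi}}{\pi(1+k^2)}$. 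So the error is $\frac{(-1)^ke^{-\pi}+o(1)}{\pi k^2}$, a fixed fraction of the main term.

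Hence the set of $(h,k)$ not covered by the perturbative estimate is infinite, and ``positivity for finitely many cases'' does not close the argument. The interior remainder has the same problem. For $R=e^{-h\alpha}-e^{-h|\varphi|}$, two integrations by parts give only $k^{-2}\int_0^\pi|R''|$, and for $h=O(1)$ this is again of order $1/k^2$.

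A repair would need two things. First, pair the endpoint term with the truncation error of the full-line transform; they cancel down to $\frac{(-1)^kh^3e^{-\pi h}}{\pi k^2(h^2+k^2)}$. Second, either integrate by parts once more using $R''(0^\pm)=0$, or combine Riemann--Lebesgue for each fixed $h$ with $\int_0^\pi|R''|\lesssim h^{-1}$ for large $h$. None of this is carried out, and you leave it as your ``main obstacle''.

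The missing idea, which makes all of this unnecessary, is convexity. With $\alpha(t)=2\operatorname{arsinh}(\sin\frac t2)$ one has $0\le\alpha'\le1$ and $\alpha''\le0$ on $[0,\pi]$. So $F_h=e^{-h\alpha}$ satisfies $F_h''=e^{-h\alpha}(h^2\alpha'^2-h\alpha'')\ge0$, with $F_h'(0)=-h$ and $F_h'(\pi)=0$. Integrate by parts twice on $[0,\pi]$ rather than $[-\pi,\pi]$, writing $\sin(kt)=\frac{d}{dt}\frac{1-\cos(kt)}{k}$. All boundary terms then vanish, since $1-\cos(kt)$ is zero at $t=0$ and $F_h'(\pi)=0$. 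This gives
\[
P_h(k)=\frac1{\pi k^2}\int_0^\pi\bigl(1-\cos(kt)\bigr)F_h''(t)\,dt,\qquad k\neq0,
\]
whose integrand is nonnegative. Positivity and the bound $P_h(k)\le\frac{2}{\pi k^2}\int_0^\pi F_h''=\frac{2h}{\pi k^2}$ follow at once. The lower bound comes from $F_h''\ge\frac{3}{5e}h^2$ on $[0,1/h]$:
\begin{itemize}
\item for $0<|k|\le 2h$, use $1-\cos(kt)\ge\frac{2k^2t^2}{\pi^2}$ on $[0,1/h]$;
\item for $|k|>2h$, use $\int_0^{1/h}(1-\cos(kt))\,dt\ge\frac1{2h}$;
\item for $k=0$, use $F_h\ge e^{-1}$ on $[0,1/h]$.
\end{itemize}
This route needs no comparison with $e^{-h|\varphi|}$, no error asymptotics and no exceptional set, and the corner at $\varphi=0$ never appears.
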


\begin{proof}
For $-\pi\leq t\leq\pi$, let $\alpha(t)\geq0$ be defined by
\[
\cosh\alpha(t)=2-\cos t.
\]
Then $\alpha$ is even and
\begin{align*}
P_h(k)
&=A(h+1,k)-A(h-1,k)\\
&=\frac{1}{4\pi^2}
  \int_{-\pi}^{\pi}\int_{-\pi}^{\pi}
  \frac{\cos\bigl((h-1)\theta+k\varphi\bigr)
        -\cos\bigl((h+1)\theta+k\varphi\bigr)}
       {4-2\cos\theta-2\cos\varphi}
  \,d\theta\,d\varphi\\
&=\frac{1}{4\pi^2}
  \int_{-\pi}^{\pi}\int_{-\pi}^{\pi}
  \frac{2\sin(h\theta+k\varphi)\sin\theta}
       {4-2\cos\theta-2\cos\varphi}
  \,d\theta\,d\varphi,
\end{align*}
where we used
\[
\cos(u-v)-\cos(u+v)=2\sin u\sin v.
\]
Since
\[
\cosh \alpha(\varphi)=2-\cos\varphi,
\qquad
\alpha(\varphi)\geq 0,
\]
we have
\[
4-2\cos\theta-2\cos\varphi
=2\bigl(\cosh \alpha(\varphi)-\cos\theta\bigr),
\]
and
\begin{align*}
P_h(k)
&=\frac{1}{4\pi^2}
  \int_{-\pi}^{\pi}\int_{-\pi}^{\pi}
  \frac{\sin(h\theta+k\varphi)\sin\theta}
       {\cosh \alpha(\varphi)-\cos\theta}
  \,d\theta\,d\varphi\\[1mm]
&=\frac{1}{4\pi^2}
  \int_{-\pi}^{\pi}\int_{-\pi}^{\pi}
  \frac{\sin(h\theta)\cos(k\varphi)\sin\theta
        +\sin(k\varphi)\cos(h\theta)\sin\theta}
       {\cosh \alpha(\varphi)-\cos\theta}
  \,d\theta\,d\varphi.
\end{align*}
For every fixed $\varphi$, the function
\[
\theta\longmapsto
\frac{\sin(k\varphi)\cos(h\theta)\sin\theta}
     {\cosh \alpha(\varphi)-\cos\theta}
\]
is odd.  Hence
\begin{equation}\label{tag{1}}
P_h(k)=\frac{1}{4\pi^2}
\int_{-\pi}^{\pi}\cos(k\varphi)
\int_{-\pi}^{\pi}
\frac{\sin(h\theta)\sin\theta}
     {\cosh \alpha(\varphi)-\cos\theta}
\,d\theta\,d\varphi.
\end{equation}
For \(\varphi\neq0\), put
\[
\alpha=\alpha(\varphi)>0,
\qquad r=e^{-\alpha}\in(0,1).
\]
Then
\[
\sum_{\ell=1}^{\infty}r^\ell e^{i\ell\theta}
=\frac{re^{i\theta}}{1-re^{i\theta}}.
\]
Since the exceptional point \(\varphi=0\) has measure zero, the
resulting identity is sufficient for the outer integral.\\
Because $e^{i\ell\theta}=\cos(\ell\theta)+i\sin(\ell\theta)$, then
\begin{align*}
\operatorname{Re}\Big(
  \sum_{\ell=1}^{\infty}r^\ell e^{i\ell\theta}
\Big)
&=\sum_{\ell=1}^{\infty}r^\ell\cos(\ell\theta)
 =\operatorname{Re}\Big(
  \frac{re^{i\theta}}{1-re^{i\theta}}
\Big) =\frac{r\cos\theta-r^2}{1-2r\cos\theta+r^2}.
\end{align*}
Consequently,
\begin{equation}\label{tag{2}}
1+2\sum_{\ell=1}^{\infty}r^\ell\cos(\ell\theta)
=1+2\frac{r\cos\theta-r^2}
               {1-2r\cos\theta+r^2}=\frac{1-r^2}{1-2r\cos\theta+r^2}.
\end{equation}
On the other hand,
\begin{align*}
\cosh \alpha-\cos\theta
&=\frac{e^\alpha+e^{-\alpha}}{2}-\cos\theta
 =\frac{r^{-1}+r-2\cos\theta}{2}
 =\frac{1-2r\cos\theta+r^2}{2r},\\
\sinh \alpha
&=\frac{e^\alpha-e^{-\alpha}}{2}
 =\frac{r^{-1}-r}{2}
 =\frac{1-r^2}{2r}.
\end{align*}
Combining these identities with \eqref{tag{2}}, we obtain
\begin{equation}
\frac{\sinh \alpha}{\cosh \alpha-\cos\theta}
=1+2\sum_{\ell=1}^{\infty}e^{-\ell \alpha}\cos(\ell\theta),
\end{equation}
equivalently
\begin{equation}\label{tag{3}}
\frac{1}{\cosh \alpha-\cos\theta}
=\frac{1}{\sinh \alpha}
 \Big[1+2\sum_{\ell=1}^{\infty}
 e^{-\ell \alpha}\cos(\ell\theta)\Big].
\end{equation}
Now define
\[
I_h(\alpha)=\int_{-\pi}^{\pi}
\frac{\sin(h\theta)\sin\theta}
     {\cosh \alpha-\cos\theta}\,d\theta.
\]
Using
\[
\sin(h\theta)\sin\theta
=\frac{1}{2}\left[
\cos\bigl((h-1)\theta\bigr)
-\cos\bigl((h+1)\theta\bigr)
\right],
\]
we have
\begin{align*}
I_h(\alpha)
&=\frac{1}{2}\int_{-\pi}^{\pi}
\frac{\cos\bigl((h-1)\theta\bigr)
      -\cos\bigl((h+1)\theta\bigr)}
     {\cosh \alpha-\cos\theta}\,d\theta\\
&=\frac{1}{2}\int_{-\pi}^{\pi}\frac{\cos\bigl((h-1)\theta\bigr)}{\sinh \alpha}[1+2\sum_{\ell=1}^{\infty}
 e^{-\ell \alpha}\cos(\ell\theta)]\,d\theta\\
&~-\frac{1}{2}\int_{-\pi}^{\pi}\frac{\cos\bigl((h+1)\theta\bigr)}{\sinh \alpha}[1+2\sum_{\ell=1}^{\infty}
 e^{-\ell \alpha}\cos(\ell\theta)]\,d\theta.
\end{align*}
Moreover,
\[
\int_{-\pi}^{\pi}\cos(j\theta)\,d\theta=0
\quad (j\geq 1),
\]
and
\[
\int_{-\pi}^{\pi}\cos(j\theta)\cos(\ell\theta)\,d\theta
=\begin{cases}
\pi, & j=\ell\geq 1,\\
0,   & j\neq \ell.
\end{cases}
\]
For $j=0$, one also has
\(\int_{-\pi}^{\pi}1\,d\theta=2\pi\).
For every $h\geq 1$, we have
\begin{equation}\label{tag{4}}
I_h(\alpha)=\frac{\pi}{\sinh \alpha}\left[e^{-(h-1)\alpha}-e^{-(h+1)\alpha}\right]
=\frac{\pi}{\sinh \alpha}e^{-h\alpha}(e^\alpha-e^{-\alpha})
=2\pi e^{-h\alpha}.
\end{equation}
Here, when $h=1$, the term containing $\cos(0\theta)$ is supplied by the
constant term in \eqref{tag{3}}.
We have
\begin{equation}\label{eq:two-p-integral}
P_h(k)
=\frac{1}{4\pi^2}\int_{-\pi}^{\pi}
\cos(k\varphi)\,2\pi e^{-h \alpha(\varphi)}\,d\varphi
=\frac{1}{2\pi}\int_{-\pi}^{\pi}
e^{-h \alpha(\varphi)}\cos(k\varphi)\,d\varphi.
\end{equation}
Since $\alpha(\varphi)=\alpha(-\varphi)$ and
$\cos(k\varphi)=\cos(-k\varphi)$, the integrand is even. Besides, by
\[
\cosh\alpha(t)=2-\cos t=1+2\sin^2(\frac{t}{2})
\]
and
\[
\cosh \alpha(t)=1+2\sinh^2(\frac{\alpha(t)}{2}),
\]
we have
\[
\sin^2(\frac{t}{2})=\sinh^2(\frac{\alpha(t)}{2}).
\]
Since $0\leq t\leq\pi$, then we have
\[
\frac{\alpha(t)}{2}=\operatorname{arsinh}(\sin(\frac{t}{2})).
\]
Equivalently,
\begin{equation}\label{eq:two-alpha}
 \alpha(t)=2\operatorname{arsinh}\!\left(\sin\frac t2\right).
\end{equation}

Differentiating \eqref{eq:two-alpha} gives
\[
 \alpha'(t)=\frac{\cos(t/2)}
 {\sqrt{1+\sin^2(t/2)}},
 \qquad
 \alpha''(t)=-\frac{\sin(t/2)}
 {(1+\sin^2(t/2))^{3/2}}.
\]
In particular,
\[
 0\leq\alpha'\leq1,\qquad \alpha''\leq0,\qquad
 \alpha'(0)=1,\qquad \alpha'(\pi)=0.
\]
Moreover, for $0\leq s\leq1$, we have
\[
\operatorname{arsinh}s
=\int_0^s\frac{1}{\sqrt{1+\tau^2}}\,d\tau
\geq\frac{s}{\sqrt{2}}.
\]
Moreover,
\[
\sin\frac{t}{2}\geq\frac{t}{\pi},
\qquad 0\leq t\leq\pi.
\]
Therefore,
\[
\alpha(t)
=2\operatorname{arsinh}\left(\sin\frac{t}{2}\right)
\geq\sqrt{2}\sin\frac{t}{2}
\geq\frac{\sqrt{2}}{\pi}t.
\]
On the other hand, since
\[
\operatorname{arsinh}s\leq s
\qquad\text{for }s\geq0
\]
and
\[
\sin\frac{t}{2}\leq\frac{t}{2},
\]
we obtain
\[
\alpha(t)
=2\operatorname{arsinh}\left(\sin\frac{t}{2}\right)
\leq2\sin\frac{t}{2}
\leq t.
\]
Consequently, for $c_0=\frac{\sqrt2}{\pi}$, we have
\begin{equation}\label{eq:two-alpha-comparison}
 c_0t\leq\alpha(t)\leq t,
 \qquad 0\leq t\leq\pi.
\end{equation}
Set $F_h(t)=e^{-h\alpha(t)}$.  Then
\begin{equation}\label{eq:two-f-second}
 F_h''(t)=e^{-h\alpha(t)}
 \bigl(h^2\alpha'(t)^2-h\alpha''(t)\bigr)\geq0,
\end{equation}
and
\[
 F_h'(0)=-h,\qquad F_h'(\pi)=0,\qquad
 \int_0^\pi F_h''(t)\,dt=h.
\]
For $k\neq0$, two integrations by parts in
\eqref{eq:two-p-integral}, including the boundary term at $t=0$,
give
\begin{equation}\label{eq:two-p-positive}
 P_h(k)=\frac{1}{\pi k^2}
 \int_0^\pi(1-\cos(kt))F_h''(t)\,dt.
\end{equation}
Thus $P_h(k)\geq0$.
We now prove the upper bound in \eqref{eq:two-short-estimate}. By \eqref{eq:two-p-integral},
the nonnegativity of \(P_h(k)\), and the estimate
\(\alpha(t)\geq c_0t\) in \eqref{eq:two-alpha-comparison}, we have
\begin{align*}
0\leq P_h(k)
&\leq \frac{1}{\pi}
\int_0^\pi F_h(t)\lvert\cos(kt)\rvert\,dt \\
&\qquad\ \  \leq \frac{1}{\pi}
\int_0^\pi e^{-h \alpha(t)}\,dt \,    \leq \frac{1}{\pi}
\int_0^\pi e^{-c_0ht}\,dt \leq \frac{C}{h}.
\end{align*}
Thus
\begin{equation}\label{tag{A.20}}
P_h(k)\leq \frac{C}{h}
\qquad\text{for every }k\in\mathbb Z.
\end{equation}

For \(k\neq0\), formula \eqref{eq:two-p-positive}, together with
\(0\leq 1-\cos(kt)\leq2\) and \(F_h''(t)\geq0\), gives
\begin{align*}
P_h(k)
&=\frac{1}{\pi k^2}
\int_0^\pi
\bigl(1-\cos(kt)\bigr)F_h''(t)\,dt  \leq \frac{2}{\pi k^2}
\int_0^\pi F_h''(t)\,dt.
\end{align*}
Since
\[
F_h'(0)=-h,
\qquad
F_h'(\pi)=0,
\]
the fundamental theorem of calculus yields
\[
\int_0^\pi F_h''(t)\,dt
=F_h'(\pi)-F_h'(0)
=h.
\]
Consequently,
\begin{equation}\label{tag{A.21}}
P_h(k)\leq \frac{2h}{\pi k^2},
\qquad k\neq0.
\end{equation}

Combining \eqref{tag{A.20}} and \eqref{tag{A.21}}, we obtain
\[
P_h(k)
\leq C\min\left\{\frac{1}{h},\frac{h}{k^2}\right\},
\qquad k\neq0.
\]
If \(0<|k|\leq h\), then \(h^2+k^2\leq2h^2\), and hence
\[
\frac{1}{h}\leq\frac{2h}{h^2+k^2}.
\]
If \(|k|>h\), then \(h^2+k^2\leq2k^2\), and therefore
\[
\frac{h}{k^2}\leq\frac{2h}{h^2+k^2}.
\]
For \(k=0\), estimate \eqref{tag{A.20}} directly gives
\[
P_h(0)\leq\frac{C}{h}
=C\frac{h}{h^2+0^2}.
\]
After enlarging \(C\) if necessary, we conclude that
\[
P_h(k)\leq C\frac{h}{h^2+k^2},
\qquad h\geq1,\quad k\in\mathbb Z,
\]
which is the upper bound in \eqref{eq:two-short-estimate}.

For completeness, we prove the lower bound. Recall that
\[
F_h(t)=e^{-h\alpha(t)},
\qquad
\alpha(t)=\operatorname{arcosh}(2-\cos t).
\]
Since
\[
\alpha(t)
=2\operatorname{arsinh}\left(\sin\frac{t}{2}\right),
\]
we have
\[
\alpha'(t)
=
\frac{\cos(t/2)}
{\sqrt{1+\sin^2(t/2)}}.
\]
In particular, \(0\leq\alpha'(t)\leq1\). Since \(\alpha(0)=0\),
\[
\alpha(t)=\int_0^t\alpha'(s)\,ds\leq t.
\]
Consequently, if \(0\leq t\leq 1/h\), then
\[
h\alpha(t)\leq ht\leq1,
\]
and hence
\[
e^{-h\alpha(t)}\geq e^{-1}.
\]

Moreover, since \(h\geq1\) and \(0\leq t\leq1/h\), we have
\(0\leq t\leq1\), and therefore
\[
\sin^2\frac{t}{2}
\leq\frac{t^2}{4}
\leq\frac14.
\]
It follows that
\begin{align*}
\bigl(\alpha'(t)\bigr)^2
&=
\frac{\cos^2(t/2)}
     {1+\sin^2(t/2)}
 =
\frac{1-\sin^2(t/2)}
     {1+\sin^2(t/2)}  \geq
\frac{1-\frac14}{1+\frac14}
=\frac35.
\end{align*}

By \eqref{eq:two-f-second},
\[
F_h''(t)
=
e^{-h\alpha(t)}
\left[
h^2\bigl(\alpha'(t)\bigr)^2-h\alpha''(t)
\right].
\]
Since \(\alpha''(t)\leq0\), we obtain
\[
F_h''(t)
\geq
h^2e^{-h\alpha(t)}
\bigl(\alpha'(t)\bigr)^2.
\]
Thus
\begin{equation}\label{tag{A.22}}
F_h''(t)\geq c_*h^2,
\qquad
0\leq t\leq\frac1h,
\end{equation}
where $c_*=\frac{3}{5e}.$

We now divide the proof into three cases.

\medskip
\noindent
\textit{Case 1: \(k=0\).}
By \eqref{eq:two-p-integral} and the estimate
\(e^{-h\alpha(t)}\geq e^{-1}\) on \([0,\frac{1}{h}]\), we have
\begin{align*}
P_h(0)
&=\frac{1}{\pi}\int_0^\pi F_h(t)\,dt  \geq\frac{1}{\pi}\int_0^{1/h}
e^{-h\alpha(t)}\,dt  \geq\frac{e^{-1}}{\pi}
\int_0^{1/h}dt
=\frac{e^{-1}}{\pi h}.
\end{align*}
Since
\[
\frac{h}{h^2+0^2}=\frac1h,
\]
it follows that
\[
P_h(0)
\geq\frac{e^{-1}}{\pi}
\frac{h}{h^2+0^2}.
\]

\medskip
\noindent
\textit{Case 2: \(0<|k|\leq2h\).}
For \(0\leq t\leq\frac{1}{h}\), we have
\[
|kt|\leq\frac{|k|}{h}\leq2<\pi.
\]
For every \(|x|\leq\pi\),
\[
1-\cos x
=2\sin^2\frac{x}{2}
\geq\frac{2x^2}{\pi^2}.
\]
Taking \(x=kt\), we obtain
\[
1-\cos(kt)
\geq\frac{2k^2t^2}{\pi^2},
\qquad
0\leq t\leq\frac1h.
\]
Using \eqref{eq:two-p-positive} and \eqref{tag{A.22}}, we find
\begin{align*}
P_h(k)
&\geq
\frac{1}{\pi k^2}
\int_0^{1/h}
\bigl(1-\cos(kt)\bigr)F_h''(t)\,dt \\
&\geq
\frac{1}{\pi k^2}
\int_0^{1/h}
\frac{2k^2t^2}{\pi^2}\,c_*h^2\,dt \\
&=
\frac{2c_*h^2}{\pi^3}
\int_0^{1/h}t^2\,dt \\
&=
\frac{2c_*h^2}{\pi^3}
\cdot\frac{1}{3h^3}
=\frac{2c_*}{3\pi^3h}.
\end{align*}
Since
\[
\frac{h}{h^2+k^2}\leq\frac1h,
\]
we conclude that
\[
P_h(k)
\geq
\frac{2c_*}{3\pi^3}
\frac{h}{h^2+k^2}.
\]

\medskip
\noindent
\textit{Case 3: \(|k|>2h\).}
A direct calculation gives
\begin{align*}
\int_0^{1/h}\bigl(1-\cos(kt)\bigr)\,dt
&=
\left[t-\frac{\sin(kt)}{k}\right]_0^{1/h}
 =
\frac1h-\frac{\sin(\frac{k}{h})}{k}.
\end{align*}
Since
\[
\Big|\frac{\sin(\frac{k}{h})}{k}\Big|
\leq\frac1{|k|}
\]
and \(|k|>2h\), it follows that
\begin{align*}
\int_0^{1/h}\bigl(1-\cos(kt)\bigr)\,dt
&\geq
\frac1h-\frac1{|k|} \geq
\frac1h-\frac1{2h}
=\frac1{2h}.
\end{align*}
Therefore, by \eqref{eq:two-p-positive} and \eqref{tag{A.22}},
\begin{align*}
P_h(k)
&\geq
\frac{1}{\pi k^2}
\int_0^{1/h}
\bigl(1-\cos(kt)\bigr)F_h''(t)\,dt \\
&\geq
\frac{c_*h^2}{\pi k^2}
\int_0^{1/h}
\bigl(1-\cos(kt)\bigr)\,dt \\
&\geq
\frac{c_*h^2}{\pi k^2}\cdot\frac1{2h}
=\frac{c_*h}{2\pi k^2}.
\end{align*}
Since \(h^2+k^2\geq k^2\), we have
\[
\frac1{k^2}\geq\frac1{h^2+k^2}.
\]
Consequently,
\[
P_h(k)
\geq
\frac{c_*}{2\pi}
\frac{h}{h^2+k^2}.
\]

Finally, set
\[
c=
\min\left\{
\frac{e^{-1}}{\pi},
\frac{2c_*}{3\pi^3},
\frac{c_*}{2\pi}
\right\}>0.
\]
The preceding three cases show that
\[
P_h(k)\geq c\frac{h}{h^2+k^2},
\qquad
h\geq1,\quad k\in\mathbb Z.
\]
This proves the lower bound in \eqref{eq:two-short-estimate} and completes the
proof.
\end{proof}

\begin{proof}[Proof of Proposition \ref{prop:two-dimensional-green}]
Fix \(y=(y_1,y_2)\in \mathbb{Z}_+^2\) and define
\[
\Phi_{2,+}(x,y)
   =A(x-y^{*})-A(x-y),
   \qquad x\in\mathbb{Z}_+^2.
\]
If \(x=(x_1,x_{2})=(0,x_{2})\), then
\[
x-y^{*}=(y_{1},x_{2}-y_{2}),
\qquad
x-y=(-y_{1},x_{2}-y_{2}).
\]
By the reflection symmetry of the potential kernel stated in
Lemma~A.1,
\[
A(y_{1},x_{2}-y_{2})
   =A(-y_{1},x_{2}-y_{2}).
\]
Consequently,
\[
\Phi_{2,+}(x,y)=0
\qquad\text{whenever }x_{1}=0.
\]
If \(x_1\ge2\), all neighbors of \(x\) belong to \(\mathbb{Z}_+^2\), so the
full-space and Dirichlet Laplacians agree. If \(x_1=1\), the only
neighbor of \(x\) outside \(\mathbb{Z}_+^2\) is \(x-e_1=(0,x_2)\). At this point
\(\Phi_{2,+}(x-e_1,y)=0\), which agrees with the zero extension.
Thus the restriction of \(\Phi_{2,+}(x,y)\) to \(\mathbb{Z}_+^2\), extended by zero
outside \(\mathbb{Z}_+^2\), satisfies the Dirichlet boundary condition.

Besides, by \eqref{eq:two-potential-equation}, we have
\[
-\Delta_x A(x-y^{*})=-\delta_{y^{*}}(x)
\]
and
\[
-\Delta_x\bigl(-A(x-y)\bigr)=\delta_y(x).
\]
Therefore,
\[
-\Delta_x\bigl[A(x-y^*)-A(x-y)\bigr]
=\delta_y(x)-\delta_{y^*}(x).
\]
Hence
\[
-\Delta_{+}\Phi_{2,+}(x,y)
   =\delta_y(x),
\qquad x,y\in \mathbb{Z}_+^2.
\]

We next verify symmetry. Write
\[
x=(m,x_{2}),\qquad y=(n,y_{2}).
\]
Using the invariance of \(A\) under reflections in the coordinate axes,
we obtain
\[
\begin{aligned}
A(x-y^{*})
 &=A(m+n,x_{2}-y_{2})  \\
 &=A(m+n,y_{2}-x_{2})
 =A(y-x^{*}),
\end{aligned}
\]
and similarly,
\[
A(x-y)
 =A(m-n,x_{2}-y_{2})
 =A(n-m,y_{2}-x_{2})
 =A(y-x).
\]
It follows that
\[
\Phi_{2,+}(x,y)=\Phi_{2,+}(y,x).
\]

We now derive the two-sided estimate. Put
\[
k=x_{2}-y_{2},\qquad
b=|m-n|,\qquad
N=\min\{m,n\}.
\]
Since \(m,n\geq 1\), we have \(N\geq1\) and
\[
m+n=|m-n|+2\min\{m,n\}=b+2N.
\]
By the symmetry of \(A\) in the first coordinate,
\[
A(x-y)=A(b,k),
\qquad
A(x-y^{*})=A(m+n,k).
\]
Therefore,
\[
\Phi_{2,+}(x,y)=A(m+n,k)-A(b,k).
\]

Recall that
\[
P_h(k)=A(h+1,k)-A(h-1,k),
\qquad h\geq1.
\]
Since \(m+n=b+2N\), $\Phi_{2,+}(x,y)$ can be written as
\[
\begin{aligned}
\Phi_{2,+}(x,y)&=A(m+n,k)-A(b,k)\\
 &=A(b+2N,k)-A(b,k)\\
 &=\sum_{j=0}^{N-1}
   \bigl[A(b+2j+2,k)-A(b+2j,k)\bigr] =\sum_{j=0}^{N-1}P_{b+2j+1}(k).
\end{aligned}
\]
Thus
\begin{equation}\label{tag{P double}}
\Phi_{2,+}(x,y)
   =\sum_{j=0}^{N-1}P_{b+2j+1}(k).
\end{equation}
For fixed \(x,y\in \mathbb{Z}_+^2\), the integer \(N\) is finite. Hence the
sum contains only finitely many terms, and \(\Phi_{2,+}(x,y)\) is finite. Moreover, Lemma~A.2 gives
\[
P_h(k)>0
\qquad\text{for every }h\geq1,\ k\in\mathbb Z.
\]
Since \(N\geq1\), formula \eqref{tag{P double}} implies
\[
\Phi_{2,+}(x,y)>0.
\]

Lemma~A.2 also gives constants \(c_{0},C_{0}>0\), independent of
\(h\) and \(k\), such that
\[
c_{0}\frac{h}{h^{2}+k^{2}}
 \leq P_h(k)
 \leq C_{0}\frac{h}{h^{2}+k^{2}}.
\]
Applying this estimate term by term in \eqref{tag{P double}}, we find
\begin{equation}\label{A29}
\Phi_{2,+}(x,y)
 \asymp
 \sum_{j=0}^{N-1}
 \frac{b+2j+1}{(b+2j+1)^{2}+k^{2}}.
\end{equation}

To estimate the sum, for \(h\geq1\) define
\[
J_h
 =\int_{h-1}^{h+1}
     \frac{t}{1+k^{2}+t^{2}}\,dt.
\]
By definition, it suffices to compare the denominator on the interval
\([h-1,h+1]\). Since \(t\geq h-1\geq0\), we have
\[
1+(h-1)^2-\frac{h^2}{2}
=\frac{(h-2)^2}{2}\geq0.
\]
and
\[
1+t^2\geq 1+(h-1)^2\geq\frac{h^2}{2},
\]
Consequently,
\[
1+k^2+t^2\geq\frac{h^2+k^2}{2}.
\]
It follows that
\[
\begin{aligned}
J_h
&\leq \frac{2}{h^2+k^2}
      \int_{h-1}^{h+1}t\,dt  =\frac{2}{h^2+k^2}\cdot 2h
 =\frac{4h}{h^2+k^2}.
\end{aligned}
\]
Thus,
\[
\frac{J_h}{4}\leq\frac{h}{h^2+k^2}.
\]
On the other hand, since \(t\leq h+1\), we obtain
\[
\begin{aligned}
1+k^2+t^2
&\leq 1+k^2+(h+1)^2 \\
&=k^2+h^2+2h+2  \leq 5(k^2+h^2).
\end{aligned}
\]
Indeed, because \(h\geq1\) and \(k^2\geq0\),
\[
\begin{aligned}
5(k^2+h^2)-(k^2+h^2+2h+2)
&=4k^2+4h^2-2h-2 \\[1mm]
&=4k^2+2(2h+1)(h-1)
\geq0.
\end{aligned}
\]
Therefore,
\[
\begin{aligned}
J_h
&\geq\frac{1}{5(h^2+k^2)}
      \int_{h-1}^{h+1}t\,dt  =\frac{2h}{5(h^2+k^2)}
 \geq\frac{h}{5(h^2+k^2)}.
\end{aligned}
\]
Equivalently,
\[
\frac{h}{h^2+k^2}\leq5J_h.
\]
Combining the two estimates gives
\begin{equation}\label{A30}
\frac{J_h}{4}
\leq\frac{h}{h^2+k^2}
\leq5J_h.
\end{equation}

For
\[
h_j=b+2j+1,\qquad 0\leq j\leq N-1,
\]
the corresponding intervals are
\[
[h_j-1,h_j+1]
 =[b+2j,b+2j+2].
\]
These intervals have disjoint interiors and their union is
\[
[b,b+2N]=[b,m+n].
\]
Consequently, summing \eqref{A30} over \(j\) yields
\[
\sum_{j=0}^{N-1}
 \frac{b+2j+1}{(b+2j+1)^{2}+k^{2}}
 \asymp
 \int_b^{m+n}
 \frac{t}{1+k^{2}+t^{2}}\,dt.
\]
The integral can be computed explicitly:
\[
\begin{aligned}
\int_b^{m+n}
 \frac{t}{1+k^{2}+t^{2}}\,dt
 &=\frac12
   \log\frac{1+k^{2}+(m+n)^{2}}
              {1+k^{2}+b^{2}} =\frac12
   \log\frac{1+k^{2}+(m+n)^{2}}
              {1+k^{2}+(m-n)^{2}}.
\end{aligned}
\]
Since
\[
(m+n)^{2}=(m-n)^{2}+4mn,
\]
we obtain
\[
\begin{aligned}
\int_b^{m+n}
 \frac{t}{1+k^{2}+t^{2}}\,dt
 &=\frac12\log\left(
     1+\frac{4mn}
     {1+k^{2}+(m-n)^{2}}
   \right) =\frac12\log\left(
     1+\frac{4x_{1}y_{1}}
     {1+|x-y|^{2}}
   \right).
\end{aligned}
\]
For every \(v\geq0\),
\[
\log(1+v)
 \leq\log(1+4v)
 \leq4\log(1+v).
\]
Combining this inequality with \eqref{A29}, we obtain constants
\(c,C>0\) such that
\[
c\log\left(
1+\frac{x_{1}y_{1}}{1+|x-y|^{2}}
\right)
\leq \Phi_{2,+}(x,y)
\leq
C\log\left(
1+\frac{x_{1}y_{1}}{1+|x-y|^{2}}
\right).
\]
This proves \eqref{eq:two-green-logarithmic}.

Finally, estimate \eqref{eq:two-green-logarithmic} with \(y=e_1\) gives
\begin{equation}\label{y=e1}
\Phi_{2,+}(x,e_1)
 \asymp
 \log\left(
  1+\frac{x_{1}}{1+|x-e_1|^{2}}
 \right).
\end{equation}
Put
\[
t_x=\frac{x_{1}}{1+|x-e_1|^{2}}.
\]
Since
\[
x_{1}
 =1+(x_{1}-1)
 \leq1+|x-e_1|,
\]
we have
\[
0\leq t_x
 \leq\frac{1+|x-e_1|}{1+|x-e_1|^{2}}
 \leq C.
\]
Thus \(t_x\) ranges over a fixed bounded interval. On every bounded
interval \([0,M]\), there are constants \(c_M,C_M>0\) such that
\[
c_M t\leq\log(1+t)\leq C_Mt,
\qquad 0\leq t\leq M.
\]
Applying this to \eqref{y=e1}, we find
\begin{equation}\label{tag{A.32}}
\Phi_{2,+}(x,e_1)
 \asymp
 \frac{x_{1}}{1+|x-e_1|^{2}}.
\end{equation}
Moreover, the triangle inequality gives
\[
1+|x-e_1|\asymp1+|x|.
\]
Hence
\[
1+|x-e_1|^{2}\asymp(1+|x|)^{2},
\]
and therefore
\[
\Phi_{2,+}(x,e_1)
 \asymp
 \frac{x_{1}}{(1+|x|)^{2}}.
\]
Since \(0<\Phi_{2,+}(e_1,e_1)<\infty\) is a fixed constant, division by
\(\Phi_{2,+}(e_1,e_1)\) gives
\[
\frac{\Phi_{2,+}(x,e_1)}{\Phi_{2,+}(e_1,e_1)}
 \asymp
 \frac{x_{1}}{(1+|x|)^{2}},
\]
which is \eqref{eq:two-pole-appendix}.

For completeness, fix \(y\in\mathbb Z_{+}^{2}\). Since
\[
x_1
\leq |x_1-y_1|+y_1
\leq |x-y|+y_1,
\]
it follows from \eqref{eq:two-green-logarithmic} that
\begin{align*}
\Phi_{2,+}(x,y)
&\leq C\frac{x_1y_1}{1+|x-y|^2} \leq Cy_1
\left(
\frac{|x-y|}{1+|x-y|^2}
+\frac{y_1}{1+|x-y|^2}
\right).
\end{align*}
Besides, since
\[
\frac{|x-y|}{1+|x-y|^2}\leq \frac{2}{1+|x-y|}
\]
and
\[
\frac{1}{1+|x-y|^2}\leq \frac{2}{(1+|x-y|)^2},
\]
we obtain
\[
\Phi_{2,+}(x,y)
\leq
Cy_1\left(
\frac{1}{1+|x-y|}
+\frac{y_1}{(1+|x-y|)^2}
\right).
\]
Since \(y\) is fixed, \(|x-y|\to\infty\) as
\(|x|\to\infty\). Consequently,
\[
\Phi_{2,+}(x,y)\rightarrow0
\qquad\text{as } |x|\rightarrow\infty.
\]
The proof is complete.
\end{proof}

  \bigskip
  \medskip

 \small
\noindent {\bf  Conflicts of interest:} The authors declare that they have no conflicts of interest regarding this work.
\medskip

\noindent {\bf  Data availability:}  This paper has no associated data.\medskip

\noindent{\bf Acknowledgements:}
 H. Chen is supported by  the National Natural Science Foundation of China, No.  12361043.
\end{document}